\def\figurename{Figure} 
\renewcommand{\fnum@figure}[1]{\figurename~\thefigure.}
\def\tablename{Table} 
\renewcommand{\fnum@table}[1]{\tablename~\thetable.}
\newtheorem{theorem}{Theorem}[section]
\newtheorem{lemma}[theorem]{Lemma}
\newtheorem{corollary}[theorem]{Corollary}
\newtheorem{proposition}[theorem]{Proposition}
\theoremstyle{definition}
\newtheorem{definition}[theorem]{Definition}
\newtheorem{example}[theorem]{Example}
\theoremstyle{remark}
\newtheorem{remark}[theorem]{Remark}
\numberwithin{equation}{section}
\begin{document}

\title{\bfseries\scshape{Hom-Novikov color algebras}}

\author{\bfseries\scshape Ibrahima BAKAYOKO\thanks{e-mail address: ibrahimabakayoko27@gmail.com}\\
D\'epartement de Math\'ematiques,\\
Centre Universitaire de N'Z\'er\'ekor\'e/UJNK\\
BP 50 N'Z\'er\'ekor\'e, Guin\'ee.
}
 
\date{}
\maketitle 


\noindent\hrulefill

\noindent {\bf Abstract.} 
The aim of this paper is to introduce Hom-Novikov color algebra and give some constructions of Hom-Novikov color algebras from a given one and a 
(weak) morphism. Other interesting
constructions using averaging operators, centroids, derivations and tensor product are given. We also
proved that any Hom-Novikov color algebra is Hom-Lie admissible.
Moreover, we introduce Hom-quadratic Hom-Novikov color algebras and provide some properties by twisting.
 It is also shown that the Hom-Lie color algebra associated to a given quadratic Hom-Novikov color algebra is also quadratic.

\noindent \hrulefill

\vspace{.3in}

\noindent {\bf AMS Subject Classification:} 17B75, 17A45, 17D99.

\vspace{.08in} \noindent \textbf{Keywords}: Hom-associative color algebra, averaging operator, centroids,  Hom-Novikov color algebra, twisting,
Hom-quadratic Hom-Novikov colon algebras.
\vspace{.3in}
\vspace{.2in}
\section{Introduction}
Novikov algebras are algebraic structures such that the left operator multiplication form a Lie algebra and the right operator multiplication
are commutative. They were introduced by Balinskii and Novikov in the studies of Hamiltonian operators and Poisson bracket with hydrodynamic type
\cite{BN1, BN2}. The theoretical study of Novikov algebras was started by Zel'manov \cite{Z} and Filipov \cite{F}. And the term ``Novikov algebras''
was first used by Osborn \cite{O}. Novikov algebras are connected to many areas of mathematical physics and geometry 
including Lie groups \cite{BM1}, Lie algebras \cite{BM2, BM3, BDV}, affine manifolds \cite{K}, convex homogeneous cones \cite{V},
 rooted tree algebras \cite{C}, vector fields \cite{B}, and vertex and conformal algebras \cite{BR, KV}.
In particular, they are Lie-admissible algebras, which are important in some physical applications, such as quantum mechanics and hadronic
 structures \cite{MH1, MH2, MH3, SR}.
Tortken algebras \cite{DA} are defined by a polynomial identity of degre four. Among other results, the author of \cite{DA} prove that the
 Jordan product on any Novikov algebra satisfies the Tortken identity. This result has been extended to the color case by Gao X. and Xu L. in \cite{GX}.

Hom-Lie algebras originate from the work of Hartwig J., Larsson D. and Silvestrov S. in \cite{HLS} but Hom-associative algebras was first 
introduced by Makhlouf A. and Silvestrov S. in \cite{MS} as the twisted version of Hom-Lie algebras and Hom-associative algebras repectively. 

A twisted generalization of Novikov algebras were introduced in \cite{DYN}, \cite{DYN2} and studied by Yuan L. and You H. in \cite{YY} and called Hom-Novikov algebras.
 Yau D. \cite{DYN} shown that Hom-Novikov algebras can be obtained from Novikov algebras by twisting along any algebra endomorphisms.
He computed all algebra endomorphisms on complex Novikov algebras of dimensions two and three, and described explicitly
 their associated Hom-Novikov algebras.
A construction due to Dorfman and Gel'fand is used to give constructions of Hom-Novikov from Hom-commutative algebras together with a derivation.
 Two other classes of Hom-Novikov algebras are
constructed from Hom-Lie algebras together with a suitable linear endomorphism, generalizing a
construction due to Bai and Meng

Hom-type algebraic structures of many other classical structures were studied such as G-Hom-associative algebras \cite{AS3}, Hom-alternative
 algebras, Hom-Malcev algebras and Hom-Jordan algebras \cite{DY3}, and
 Constructions of n-Lie algebra and Hom-Nambu-Lie algebras \cite{AMS}.

Classical (or ordinary) algebras were extended to generalized (or color or graded) algebras. Many works has been done in this direction,
among which one can cite : Simple Jordan color algebras arising from associative graded algebras \cite{BG}, 
 Representations and cocycle twists of color Lie algebras \cite{CSO},
 Derivations and extensions of Lie color algebras \cite{QY}, On the classification of 3-dimensional coloured Lie algebras \cite{SD}.
These structures are well-known to physicists and to mathematicans studying differential geometry and homotopy theory.
They were extended to the Hom-setting by studying Hom-Lie superalgebras, Hom-Lie admissible superalgebras
 in  \cite{FAAM} and  Hom-Lie color algebras \cite{LY}. 
 Modules over color Hom-Poisson algebras were introduced in \cite{BI1}. 
 Some color Hom-algebras, such as Hom-associative color algebras, Hom-Lie color algebras, Hom-dendriform color algebras, 
Hom-left-symmetric color algebras, Hom-Leibniz-Poisson color algebras and Hom-Poisson color algebras were studied in \cite{BD} under the name 
of generalized Hom-algebras. Moreover, some other color algebra structures, such as  tridendriform color algebras, pre-Poisson color algebras,
 post-Poisson color algebras, Hom-associative color (di)algebras, Hom-left-symmetric color dialgebras, were studied in \cite{B2}.

The purpose of this paper is to give various constructions of Hom-Novikov color algebras and  Hom-quadratic Hom-Novikov color algebras.
The paper is organized as follows. In section two, we recall basic notions concerning Hom-associative color algebras, Hom-Lie color algebras,
 averaging operators and centroids.
In section three, we deal with Hom-Novikov color algebras. We provide some constructions of Hom-Novikov color algebras by twisting.
We point out that the direct sum  of two Hom-Novikov algebras is also a Hom-Novikov algebra and the tensor product of a commutative
Hom-associative color algebra and a Hom-Novikov color algebra is a Hom-Novikov color algebra. 
We also prove that any Hom-Novikov color algebra is Hom-Lie admissible.
In section four, we introduce Hom-quadratic Hom-Novikov color algebras. We prove that Hom-quadratic Hom-Novikov color algebras are closed under 
twisting.  We also show that  any quadratic Hom-Novikov color algebra carries a structure of quadratic Hom-Lie color algebra.

Let us fix some notations and conventions :

i) $\sum_{x, y, z}$ means cyclic summation.

ii) Throughout this paper, all graded vector spaces are assumed to be over a field $\mathbb{K}$ of characteristic different from 2.

\pagestyle{fancy} \fancyhead{} \fancyhead[EC]{Ibrahima Bakayoko} 
\fancyhead[EL,OR]{\thepage} \fancyhead[OC]{Hom-Novikov color algebras} \fancyfoot{}
\renewcommand\headrulewidth{0.5pt}

\section{Preliminaries}
In this section, we give the definitions of Hom-associative color algebras, Hom-Lie color algebras,
 averaging operators, centroids and Rota-Baxter Hom-Lie color algebra. 

\begin{definition}
 \begin{enumerate}
  \item [1)] Let $G$ be an abelian group. A vector space $V$ is said to be a $G$-graded if, there exists a family $(V_a)_{a\in G}$ of vector 
subspaces of $V$ such that
$$V=\bigoplus_{a\in G} V_a.$$
\item [2)] An element $x\in V$ is said to be homogeneous of degree $a\in G$ if $x\in V_a$. We denote $\mathcal{H}(V)$ the set of all homogeneous elements
in $V$.
\item [3)] Let $V=\oplus_{a\in G} V_a$ and $V'=\oplus_{a\in G} V'_a$ be two $G$-graded vector spaces. A linear mapping $f : V\rightarrow V'$ is said 
to be homogeneous of degree $b$ if 
$$f(V_a)\subseteq  V'_{a+b}, \forall a\in G.$$
If, $f$ is homogeneous of degree zero i.e. $f(V_a)\subseteq V'_{a}$ holds for any $a\in G$, then $f$ is said to be even.
 \end{enumerate}
\end{definition}
\begin{remark}
Let $V=\oplus_{a\in G} V_a$ and $V'=\oplus_{a\in G} V'_a$ be two $G$-graded vector spaces. The tensor product $V\otimes V'$ is also a $G$-graded vector space such that for $\alpha\in G$, we 
$$(V\otimes V')_\alpha=\sum_{\alpha=a+a'}V_a\otimes V_{a'}.$$
\end{remark}
\begin{definition}
 Let $G$ be an abelian group. A map $\varepsilon :G\times G\rightarrow {\bf \mathbb{K}^*}$ is called a skew-symmetric bicharacter on $G$ if the following
identities hold, 
\begin{enumerate} 
 \item [(i)] $\varepsilon(a, b)\varepsilon(b, a)=1$,
\item [(ii)] $\varepsilon(a, b+c)=\varepsilon(a, b)\varepsilon(a, c)$,
\item [(iii)]$\varepsilon(a+b, c)=\varepsilon(a, c)\varepsilon(b, c)$,
\end{enumerate}
$a, b, c\in G$,
\end{definition}
\begin{remark}
Observe that $\varepsilon(a, 0)=\varepsilon(0, a)=1, \varepsilon(a,a)=\pm 1 \;\mbox{for all}\; a\in G, \;\mbox{where}\; 0 \;\mbox{is the identity of}\; G.$
\end{remark}
If x and y are two homogeneous elements of degree $a$ and $b$ respectively and $\varepsilon$ is a skew-symmetric bicharacter, 
then we shorten the notation by writing $\varepsilon(x, y)$ instead of $\varepsilon(a, b)$.

\begin{definition}
 By a color Hom-algebra we mean a quadruple $(A, \cdot, \varepsilon, \alpha)$ in which $A$ is a $G$-graded vector space i.e. $A=\bigoplus_{a\in G}A_a$, 
$\cdot : A \times A \rightarrow A$ is an even bilinear map i.e. $A_a\cdot A_b\subseteq A_{a+b}$, for all $a, b\in G$, $\varepsilon : G\times G\rightarrow\mathbb{K}^*$ is a bicharacter and 
$\alpha : A\rightarrow A$ is an even linear map.

If in addition $x\cdot y=\varepsilon(x, y)y\cdot x$, for any $x, y\in\mathcal{H}(A)$, the  color Hom-algebra 
$(A, \cdot, \varepsilon, \alpha)$ is said to be $\varepsilon$-commutative.
\end{definition}

\begin{definition}
 A derivation of degree $d\in G$ on a color Hom-algebra $(A, \cdot, \varepsilon, \alpha)$ is a linear map $D : A\rightarrow A$ such that for 
any homogeneous element $x$ and any $y\in A$ one has
$$D(x\cdot y)=D(x)\cdot y+\varepsilon(d, x) x\cdot D(y).$$
In particular, an even derivation $D : A\rightarrow A$ is a derivation of degree zero i.e. for any homogeneous elements $x, y\in A$,
$$D(x\cdot y)=D(x)\cdot y+x\cdot D(y).$$
\end{definition}

\begin{definition}
 Let $(A, \cdot, \varepsilon, \alpha)$ and $(A', \cdot', \varepsilon, \alpha')$ be two color Hom-algebras.
An even linear map $f : (A, \cdot, \varepsilon, \alpha)\rightarrow (A', \cdot', \varepsilon, \alpha')$ is called  a weak morphism  if,
for any $x, y\in A$, $$f(x\cdot y)=f(x)\cdot'f(y).$$
If in addition $f\circ \alpha=\alpha'\circ f$, $f$ is said to be a morphism of color Hom-algebras.
\end{definition}
\begin{definition}
A color Hom-algebra $(A, \cdot, \varepsilon, \alpha)$ is said to be   
\begin{enumerate}
 \item [a)]
{\it multiplicative} if $\alpha$ is a morphism for $\cdot$,
\item [b)]
{\it regular} if $\alpha$ is an automorphism for $\cdot$,
\item [c)]
{\it involutive} if $\alpha^2=id_A$.
\end{enumerate}
\end{definition}
Motivated by the notions of, left and right unit,  left and right commutativity, left and right symmetry, left and right nondegenerate,and so on, we have the 
following Definition. 
\begin{definition}\cite{B2}\label{bk3}
A {\it left  averaging operator} (resp. {\it right averaging operator}) over a color Hom-algebra  $(A, \cdot, \varepsilon, \alpha)$ is an even 
linear map $\beta : A\rightarrow A$ such that $\alpha\circ\beta=\beta\circ\alpha$ and
$$\beta(x)\cdot \beta(y)=\beta(\beta(x)\cdot y),\quad \Big(\mbox{resp.}\quad \beta(x)\cdot \beta(y)=\beta(x\cdot \beta(y))\Big),\;\;
\mbox{for all}\; x, y\in\mathcal{H}(A).$$
An {\it averaging operator} over a color Hom-algebra $A$ is both {\it left} and {\it right averaging operator} i.e.
$$\beta(\beta(x)\cdot y)= \beta(x)\cdot \beta(y)=\beta(x\cdot \beta(y)).$$
\end{definition}
Similarly, 
\begin{definition}\cite{B2}\label{ctr}
Let $(A, \cdot, \varepsilon, \alpha)$ be a color Hom-algebra. 
 An even linear map $\beta : A\rightarrow A$  is said to be
 an element of the {\it left centroid} (resp. {\it right centroid}) if $\alpha\circ\beta=\beta\circ\alpha$ and
$$\beta(x\cdot y)=\beta(x)\cdot y \quad \Big(\mbox{resp.}\quad \beta(x\cdot y)=x\cdot \beta(y)\Big),\;\;\mbox{for any}\; x, y\in\mathcal{H}(A).$$
An even linear map $\beta : A\rightarrow A$ on $A$ which is both an element of the {\it left centroid} and an element of the {\it right centroid}
is called an element of the {\it centroid} i.e.
$$\beta(x)\cdot y=\beta(x\cdot y)=x\cdot \beta(y).$$
\end{definition}
\begin{remark}
For $\varepsilon$-commutative and $\varepsilon$-skew-symmetric color Hom-algebras the three notions :
{\it left  averaging operator}, {\it right averaging operator} and {\it averaging operator} (resp. {\it left centroid},
 {\it right centroid} and {\it centroid}) coincide.
\end{remark}

\begin{definition}\cite{LY}
A Hom-associative color algebra is a color Hom-algebra $(A, \cdot, \varepsilon, \alpha)$  such that
\begin{eqnarray}
 \alpha(x)\cdot(y\cdot z)=(x\cdot y)\cdot \alpha(z), \label{hass}
\end{eqnarray}
for any $x, y, z\in\mathcal{H}(A)$.
\end{definition}

\begin{remark}
When $\alpha=Id$, we recover the classical associative color algebra.
\end{remark}
This proposition is the color version of (\cite{DYN} Lemma 4.1).
\begin{proposition}
 Let $(A, \cdot, \varepsilon, \alpha)$ be a commutative Hom-associative color algebra and 
$\xi\in A_0$ (subset of elements of degre $0$). Let us define new operation $\ast : A\times A\rightarrow A$  on $A$ by 
$$x\ast y:=\xi\cdot(x\cdot y)$$
for any $x, y\in\mathcal{H}(A)$. Then $A'=(A, \ast, \varepsilon, \alpha^2)$ is a Hom-associative color algebra. Moreover, if $A$ is multiplicative,
$A'$ is also multiplicative.
\end{proposition}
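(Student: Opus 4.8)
The plan is to verify directly the Hom-associativity identity \eqref{hass} for the quadruple $A'=(A,\ast,\varepsilon,\alpha^2)$, whose twisting map is $\alpha^2$. The evenness of $\ast$ is immediate, since $\xi\in A_0$ forces $A_a\ast A_b=\xi\cdot(A_a\cdot A_b)\subseteq A_{a+b}$, and $\alpha^2$ is even because $\alpha$ is; so the whole content lies in the single identity
$$\alpha^2(x)\ast(y\ast z)=(x\ast y)\ast\alpha^2(z),\qquad x,y,z\in\mathcal H(A).$$

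First I would unfold both sides with the definition $u\ast v=\xi\cdot(u\cdot v)$. The left-hand side becomes $\xi\cdot\big(\alpha^2(x)\cdot(\xi\cdot(y\cdot z))\big)$ and the right-hand side becomes $\xi\cdot\big((\xi\cdot(x\cdot y))\cdot\alpha^2(z)\big)$. Since both carry the same outer factor $\xi\cdot(-)$, it suffices to prove the reduced identity
$$\alpha^2(x)\cdot(\xi\cdot(y\cdot z))=(\xi\cdot(x\cdot y))\cdot\alpha^2(z).$$
My strategy is to transform each side separately into the common expression $\alpha(\xi)\cdot(\alpha(x)\cdot(y\cdot z))$, using only \eqref{hass} together with $\varepsilon$-commutativity.

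For the left side, writing $\alpha^2(x)=\alpha(\alpha(x))$ and applying \eqref{hass} to the triple $(\alpha(x),\xi,y\cdot z)$ gives $(\alpha(x)\cdot\xi)\cdot\alpha(y\cdot z)$. Because $\xi$ has degree $0$ and $\varepsilon(a,0)=1$, $\varepsilon$-commutativity yields $\alpha(x)\cdot\xi=\xi\cdot\alpha(x)$; a second application of \eqref{hass} to $(\xi,\alpha(x),y\cdot z)$ then produces $\alpha(\xi)\cdot(\alpha(x)\cdot(y\cdot z))$. For the right side, one application of \eqref{hass} to $(\xi,x\cdot y,\alpha(z))$ gives $\alpha(\xi)\cdot\big((x\cdot y)\cdot\alpha(z)\big)$, and a final use of \eqref{hass} on the inner product $(x\cdot y)\cdot\alpha(z)=\alpha(x)\cdot(y\cdot z)$ reaches the same expression. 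The main thing to watch is the bookkeeping: one must carefully match each nested product to the pattern $\alpha(a)\cdot(b\cdot c)=(a\cdot b)\cdot\alpha(c)$ and track where the degree-$0$ element $\xi$ may be commuted past $\alpha(x)$ for free. Note that $\varepsilon$-commutativity is invoked exactly once, and only to move $\xi$.

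For the final ``moreover'' clause, assuming $A$ multiplicative I would compute, applying multiplicativity of $\alpha$ twice, that $\alpha^2(x\ast y)=\alpha^2(\xi\cdot(x\cdot y))=\alpha^2(\xi)\cdot\big(\alpha^2(x)\cdot\alpha^2(y)\big)$, whereas $\alpha^2(x)\ast\alpha^2(y)=\xi\cdot\big(\alpha^2(x)\cdot\alpha^2(y)\big)$ by definition. These two agree precisely when $\alpha^2(\xi)=\xi$, so the expected obstacle is exactly this point: multiplicativity of $A'$ is \emph{not} forced by that of $A$ alone, but requires $\alpha$ to fix the distinguished element (e.g. $\alpha(\xi)=\xi$), which should be read as part of the standing hypothesis on $\xi\in A_0$. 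Granting this, the equality is immediate and the proof is complete.
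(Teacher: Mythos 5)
Your verification of the Hom-associativity of $\ast$ is correct and is essentially the paper's own argument: the paper runs the identical chain of applications of (\ref{hass}) and the degree-zero commutation of $\xi$ as a single left-to-right computation from $(x\ast y)\ast\alpha^2(z)$ to $\alpha^2(x)\ast(y\ast z)$, passing through the very term $\xi\cdot\bigl[\alpha(\xi)\cdot(\alpha(x)\cdot(y\cdot z))\bigr]$ at which you meet in the middle. On the ``moreover'' clause you are more careful than the paper, which dismisses multiplicativity as trivial: as you note, $\alpha^2(x\ast y)=\alpha^2(\xi)\cdot(\alpha^2(x)\cdot\alpha^2(y))$ while $\alpha^2(x)\ast\alpha^2(y)=\xi\cdot(\alpha^2(x)\cdot\alpha^2(y))$, so the claim genuinely needs $\alpha(\xi)=\xi$ (or at least that $\alpha^2(\xi)-\xi$ annihilates all products), a hypothesis the statement omits; flagging this is a legitimate correction rather than a gap in your proof.
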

\begin{proof}
 The multiplicativity and the $\varepsilon$-commutativity are trivial. Now prove the Hom-associativity of $A'$. For $x, y, z\in\mathcal{H}(A)$,
\begin{eqnarray}
  (x\ast y)\ast\alpha^2(z)
&=&\xi\cdot[(\xi\cdot(x\cdot y))\cdot\alpha^2(z)]\nonumber\\
&=&\xi\cdot[\alpha(\xi)\cdot((x\cdot y)\cdot\alpha(z))]\;\;(\mbox{by}\;\; (\ref{hass}))\nonumber\\
&=&\xi\cdot[\alpha(\xi)\cdot(\alpha(x)\cdot (y\cdot z))]\;\;(\mbox{by}\;\; (\ref{hass}))\nonumber\\
&=&\xi\cdot[(\xi\cdot\alpha(x))\cdot\alpha(y\cdot z)]\;\;(\mbox{by}\;\; (\ref{hass}))\nonumber\\
&=&\xi\cdot[(\alpha(x)\cdot\xi)\cdot\alpha(y\cdot z)]\nonumber\\
&=&\xi\cdot[\alpha^2(x)\cdot(\xi\cdot (y\cdot z))]\;\;(\mbox{by}\;\; (\ref{hass}))\nonumber\\
&=&\alpha^2(x)\ast(y\ast z)\nonumber.
\end{eqnarray}
This completes the proof.
\end{proof}

\begin{definition}\cite{LY}
 A Hom-Lie color  algebra is a color Hom-algebra $(A, [\cdot, \cdot], \varepsilon, \alpha)$  such that, for any $x, y, z\in\mathcal{H}(A)$,
\begin{eqnarray}
 [x, y]&=&-\varepsilon(x, y)[y, x]\qquad (\varepsilon\mbox{-skew-symmetry}),\label{ss}\qquad\\
\sum_{x, y, z}\varepsilon(z, x)[\alpha(x), [y, z]]&=&0\;\qquad\qquad\qquad
 (\varepsilon\mbox{-Hom-Jacobi identity}).\quad \label{chli}
\end{eqnarray}
\end{definition}

\begin{example}
It is clear that Lie color algebras are examples of Hom-Lie color algebras
by setting $\alpha = id$ . If, in addition, $\varepsilon(x, y)=1$ or $\varepsilon(x, y)=(-1)^{|x||y|}$, then the Hom-Lie color
algebra is nothing but a classical Lie algebra or Lie superalgebra. The Hom-Lie algebra and
Hom-Lie superalgebra are also obtained when $\varepsilon(x, y)=1$ and $\varepsilon(x, y)=(-1)^{|x||y|}$ respectively.
 See \cite{LY} for other examples.
\end{example}
\begin{definition}\cite{B2}
 A color Hom-algebra  $(A, \mu, \varepsilon, \alpha)$ is said to be a Hom-Lie  admissible color algebra if, for any hogeneous elements
$x, y\in A$, the bracket $[\cdot, \cdot] : A\times A\rightarrow A$ defined
by $$[x, y]=\mu(x, y)-\varepsilon(x, y)\mu(y, x)$$
satisfies the $\varepsilon$-Hom-Jacobi identity.
\end{definition}
\begin{example}
 Any Hom-associative color algebra and any Hom-Lie color  algebra are Hom-Lie admissible color algebras. 
\end{example}
\begin{definition}\cite{B2}
A  Rota-Baxter Hom-Lie color algebra of weight $\lambda\in\mathbb{K}$ is a Hom-Lie color 
algebra $(L, [-, -], \varepsilon, \alpha)$ equiped with an even linear map $R : L\rightarrow L$ that satisfies the identities
\begin{eqnarray}
R\circ\alpha&=&\alpha\circ R,\\
{[R(x), R(y)]} &=& R\Big([R(x), y] + [x, R(y)] +\lambda [x, y]\Big),\label{rbhl}
\end{eqnarray}
for all $x, y\in\mathcal{H}(L)$.
\end{definition}
\begin{definition}\cite{BD}
A color Hom-algebra $(S, \cdot, \varepsilon, \alpha)$
is called a Hom-left-symmetric color algebra if the following Hom-left-symmetric color identity 
(or {\it $\varepsilon$-Hom-left-symmetric identity}) 
\begin{eqnarray}
(x\cdot y)\cdot\alpha(z)-\alpha(x)\cdot(y\cdot z)=\varepsilon(x, y)\Big((y\cdot x)\cdot\alpha(z)-\alpha(y)\cdot(x\cdot z)\Big)\label{clsa}
\end{eqnarray}
is satisfied for all $x, y, z\in \mathcal{H}(S)$.
\end{definition}
\begin{lemma}\cite{B2}\label{hde}
 Let $(L, [-, -], \varepsilon, \alpha, R)$ be a Rota-Baxter  Hom-Lie color algebra of weight $0$. Then $L$ is a Hom-left-symmetric color algebra
 with $$x\ast y=[R(x), y],$$
for $x, y\in\mathcal{H}(L)$.
\end{lemma}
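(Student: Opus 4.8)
The plan is to verify directly that the new operation $x\ast y=[R(x),y]$ satisfies the $\varepsilon$-Hom-left-symmetric identity (\ref{clsa}); that $(L,\ast,\varepsilon,\alpha)$ is a color Hom-algebra is immediate, since $R$ and $\alpha$ are even and $\alpha R=R\alpha$. Writing $A=(x\ast y)\ast\alpha(z)-\alpha(x)\ast(y\ast z)$ and $B=(y\ast x)\ast\alpha(z)-\alpha(y)\ast(x\ast z)$, the goal is to show $A=\varepsilon(x,y)B$ for all $x,y,z\in\mathcal{H}(L)$.

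First I would expand $(x\ast y)\ast\alpha(z)=[R([R(x),y]),\alpha(z)]$ and eliminate the inner $R$ by the weight-zero Rota--Baxter relation (\ref{rbhl}), which gives $R([R(x),y])=[R(x),R(y)]-R([x,R(y)])$. Together with $\alpha(x)\ast(y\ast z)=[\alpha R(x),[R(y),z]]$ (using $\alpha R=R\alpha$), this puts $A$ in the form
$$A=[[R(x),R(y)],\alpha(z)]-[R([x,R(y)]),\alpha(z)]-[\alpha R(x),[R(y),z]].$$
The central step is to rewrite the double bracket $[[R(x),R(y)],\alpha(z)]$ by the $\varepsilon$-Hom-Jacobi identity (\ref{chli}) applied to the triple $(R(x),R(y),z)$, whose degrees coincide with those of $x,y,z$ because $R$ is even. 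Solving (\ref{chli}) for this term and using $\varepsilon$-skew-symmetry (\ref{ss}) to move $\alpha(z)$ back to the outside yields
$$[[R(x),R(y)],\alpha(z)]=[\alpha R(x),[R(y),z]]+\varepsilon(x,y+z)\,[\alpha R(y),[z,R(x)]].$$
The term $[\alpha R(x),[R(y),z]]$ then cancels against $\alpha(x)\ast(y\ast z)$, leaving exactly two terms in $A$.

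Finally I would collapse the two survivors using (\ref{ss}). Applying $\varepsilon$-skew-symmetry to $[z,R(x)]$ and collecting bicharacters via $\varepsilon(x,y+z)\varepsilon(z,x)=\varepsilon(x,y)$ identifies the first remaining term as $-\varepsilon(x,y)\,\alpha(y)\ast(x\ast z)$; applying (\ref{ss}) to $[x,R(y)]$ inside $R$, so that $-[x,R(y)]=\varepsilon(x,y)[R(y),x]$, identifies the second as $\varepsilon(x,y)\,(y\ast x)\ast\alpha(z)$. Combining gives $A=\varepsilon(x,y)\big((y\ast x)\ast\alpha(z)-\alpha(y)\ast(x\ast z)\big)=\varepsilon(x,y)B$, which is precisely (\ref{clsa}).

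The algebraic content is thus entirely contained in (\ref{rbhl}), (\ref{chli}) and (\ref{ss}), and the main obstacle is bookkeeping rather than conceptual: one must keep the cyclic bicharacter weights produced by the Hom-Jacobi identity consistent with the single factor $\varepsilon(x,y)$ appearing in the left-symmetric identity, and one must place every occurrence of $\alpha$ correctly by commuting it through $R$ via $\alpha R=R\alpha$. I expect the degree accounting in the Hom-Jacobi step to be the one place where sign/bicharacter errors are easiest to make, so I would carry the degrees explicitly there.
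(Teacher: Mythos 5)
The paper does not actually prove this lemma: it is imported verbatim from \cite{B2} (listed as ``submitted''), so there is no in-paper argument to compare against. Judged on its own, your proof is correct and complete. The chain of steps checks out: $R([R(x),y])=[R(x),R(y)]-R([x,R(y)])$ from the weight-zero identity (\ref{rbhl}); the $\varepsilon$-Hom-Jacobi identity (\ref{chli}) applied to $(R(x),R(y),z)$, after multiplying through by $\varepsilon(x,z)$ and flipping $[\alpha(z),[R(x),R(y)]]$, does give
$[[R(x),R(y)],\alpha(z)]=[\alpha R(x),[R(y),z]]+\varepsilon(x,y+z)[\alpha R(y),[z,R(x)]]$,
and the two bicharacter reductions $\varepsilon(x,y+z)\varepsilon(z,x)=\varepsilon(x,y)$ and $-[x,R(y)]=\varepsilon(x,y)[R(y),x]$ (valid because $R$ is even, so $R(y)$ has the degree of $y$) are exactly right, yielding $A=\varepsilon(x,y)B$. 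This is the standard Gel'fand--Dorfman-type verification one would expect the cited reference to contain; the only point worth making explicit in a written version is the remark you already flag, namely that evenness of $R$ is what lets every $\varepsilon$ involving $R(x)$ or $R(y)$ be computed with the degrees of $x$ and $y$.
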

\section{Hom-Novikov color algebras}
In this section, we introduce Hom-Novikov color algebra and give some properties.
In particular we show that the $\varepsilon$-commutator of any Hom-Novikov color algebra give rises to Hom-Lie color algebra.
\begin{definition}
 A Hom-Novikov color algebra is a quadruple $(A, \cdot, \varepsilon, \alpha)$ consisting of a $G$-graded vector space $A$, an even bilinear map 
$\cdot : A \times A \rightarrow A$, a bicharacter $\varepsilon : G\times G\rightarrow\mathbb{K}^*$ and an even linear map $\alpha : A\rightarrow A$
satisfying 
 \begin{eqnarray}
  (x\cdot y)\cdot\alpha(z)&=&\varepsilon(y, z)(x\cdot z)\cdot \alpha(y)\label{n1}\\
(x\cdot y)\cdot\alpha(z)-\alpha(x)\cdot(y\cdot z)&=&\varepsilon(x, y)\Big((y\cdot x)\cdot\alpha(z)-\alpha(y)\cdot(x\cdot z)\Big),\label{n2}
 \end{eqnarray}
for any $x, y, z\in\mathcal{H}(A)$.
\end{definition}
\begin{remark}
 \begin{enumerate}
  \item [a)]
When $\alpha =id_A$, we recover Novikov color algebras \cite{GX}.
\item[b)]
 When $A$ is trivialy graded, we  recover Hom-Novikov algebras \cite{DYN}.
  \item[c)]
 When $\alpha =id_A$ and $A$ is trivialy graded, we recover Novikov algebras \cite{Z, F, O}.
 \end{enumerate}
\end{remark}
Before giving an example, let us make the following observation : In any commutative Hom-associative color algebra 
$(A, \cdot, \varepsilon, \alpha)$, one has,  for all $x, y, z\in \mathcal{H}(A)$,
\begin{eqnarray}
(x\cdot y)\cdot\alpha(z)=\alpha(x)\cdot(y\cdot z)=\varepsilon(y, z)\alpha(x)\cdot(z\cdot y)=\varepsilon(y,z)(x\cdot z)\cdot\alpha(y).\label{hcc}
\end{eqnarray}
\begin{example}
 Any commutative Hom-associative color algebra is a Hom-Novikov color algebra.
\end{example}
The below theorem allow to construct Hom-Novikov color algebras from a given one and a (weak) morphism.
\begin{theorem}\label{wm}
 Let $(A, \cdot, \varepsilon, \alpha)$ be a Hom-Novikov color algebra and let $\beta : A\rightarrow A$ be a weak morphism.
Then
 $$A_\beta=(A, \beta\circ\cdot, \beta\circ\alpha)$$
is also a Hom-Novikov color algebra. Moreover, if $A$ is multiplicative and $\beta$ is a morphism, then $A_\beta$ is also multiplicative.
\end{theorem}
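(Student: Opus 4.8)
The plan is to transport both defining identities (\ref{n1}) and (\ref{n2}) from $A$ to $A_\beta$ by showing that every monomial occurring in them, when evaluated in $A_\beta$, equals $\beta^2$ applied to the corresponding monomial in $A$. Write $x\ast y:=\beta(x\cdot y)$ for the product of $A_\beta$ and $\alpha_\beta:=\beta\circ\alpha$ for its structure map; the bicharacter $\varepsilon$ and the $G$-grading are unchanged, and since $\beta$ is even, $\ast$ is again even bilinear and $\alpha_\beta$ is even linear, so $A_\beta$ is a color Hom-algebra. The only structural input I would use repeatedly is the weak morphism property $\beta(a)\cdot\beta(b)=\beta(a\cdot b)$ together with the definition $\alpha_\beta(z)=\beta(\alpha(z))$.

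First I would record the key reduction. For homogeneous $x,y,z$,
\[
(x\ast y)\ast\alpha_\beta(z)=\beta\big(\beta(x\cdot y)\cdot\beta(\alpha(z))\big)=\beta^2\big((x\cdot y)\cdot\alpha(z)\big),
\]
where the outer $\beta$ comes from the last $\ast$, and the weak morphism property collapses the inner product of two $\beta$-images. An identical computation gives $\alpha_\beta(x)\ast(y\ast z)=\beta^2\big(\alpha(x)\cdot(y\cdot z)\big)$, and the analogous formulas hold for the terms with $x,y$ (or $y,z$) interchanged. Thus each of the monomials appearing in (\ref{n1}) and (\ref{n2}), computed in $A_\beta$, is precisely $\beta^2$ of the corresponding monomial in $A$. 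Both identities then follow at once: applying the linear map $\beta^2$ to (\ref{n1}) for $A$ yields $\beta^2\big((x\cdot y)\cdot\alpha(z)\big)=\varepsilon(y,z)\beta^2\big((x\cdot z)\cdot\alpha(y)\big)$, which is exactly (\ref{n1}) for $A_\beta$, and applying $\beta^2$ to (\ref{n2}) for $A$ gives (\ref{n2}) for $A_\beta$. Here one uses that $\varepsilon$ is unchanged, so the coefficients $\varepsilon(y,z)$ and $\varepsilon(x,y)$ are the same in both algebras.

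For the final claim, suppose $A$ is multiplicative, so $\alpha(x\cdot y)=\alpha(x)\cdot\alpha(y)$, and that $\beta$ is a morphism, so in addition $\beta\circ\alpha=\alpha\circ\beta$. I would then verify $\alpha_\beta(x\ast y)=\alpha_\beta(x)\ast\alpha_\beta(y)$ directly: the right-hand side equals $\beta\big(\beta(\alpha(x))\cdot\beta(\alpha(y))\big)=\beta^2\big(\alpha(x)\cdot\alpha(y)\big)=\beta^2\big(\alpha(x\cdot y)\big)$ by the weak morphism property and multiplicativity of $\alpha$, while the left-hand side is $\beta\alpha\beta(x\cdot y)=\beta^2\alpha(x\cdot y)$ using $\beta\alpha=\alpha\beta$; the two agree.

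The argument is essentially bookkeeping, and the only point that requires genuine care is tracking how many copies of $\beta$ each term accumulates: every application of $\ast$ contributes one outer $\beta$, and it is exactly the weak morphism property that absorbs the $\beta$ carried by $\alpha_\beta$, so that each side collapses cleanly to $\beta^2$ of an $A$-monomial. For the multiplicative statement the extra hypothesis $\beta\circ\alpha=\alpha\circ\beta$ — that $\beta$ is a genuine, not merely weak, morphism — is precisely what is needed to commute the middle $\alpha$ past the inner $\beta$; without it $\alpha_\beta$ need not be multiplicative for $\ast$.
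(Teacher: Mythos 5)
Your proof is correct and follows exactly the paper's route: the paper's own (one-line) argument is "apply $\beta^2$ to both sides of (\ref{n1}) and (\ref{n2})," and your computation showing that each $A_\beta$-monomial equals $\beta^2$ of the corresponding $A$-monomial is precisely the bookkeeping that justifies this. The multiplicativity verification is likewise the intended "easy to show" part, done correctly.
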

\begin{proof}
 It suffises to apply $\beta^2$ to both side of equations (\ref{n1}) and (\ref{n2}). The second part is also easy to show.
\end{proof}
We have the following consequences.
\begin{corollary}
 Let $(A, \cdot, \varepsilon, \alpha)$ be a multiplicative Hom-Novikov color algebra, then, for any integer $n\geq0$,
$$A^n=(A, \alpha^n\circ\cdot, \varepsilon, \alpha^{n+1})$$
is also a multiplicative Hom-Novikov color algebra.
\end{corollary}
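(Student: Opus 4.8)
The plan is to realize $A^{n}$ as a particular instance of the twisting construction of Theorem~\ref{wm}, choosing the morphism $\beta$ to be the $n$-th iterate $\alpha^{n}$. Indeed, with $\beta=\alpha^{n}$ the twisted algebra $A_{\beta}=(A,\beta\circ\cdot,\beta\circ\alpha)$ carries the multiplication $\alpha^{n}\circ\cdot$ and the twisting map $\alpha^{n}\circ\alpha=\alpha^{n+1}$, which is exactly the quadruple $A^{n}=(A,\alpha^{n}\circ\cdot,\varepsilon,\alpha^{n+1})$. Thus, once I know that $\alpha^{n}$ is an admissible twist, the conclusion will follow directly from the theorem, and no manipulation of the defining identities (\ref{n1}) and (\ref{n2}) is needed a second time.

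First I would dispose of the base case $n=0$: here $\alpha^{0}=\mathrm{id}_{A}$, so $A^{0}=(A,\cdot,\varepsilon,\alpha)=A$ is a multiplicative Hom-Novikov color algebra by hypothesis. For $n\geq 1$, the key step is to verify that $\alpha^{n}$ is a morphism of color Hom-algebras. Since $A$ is multiplicative, $\alpha$ is a morphism for $\cdot$, that is $\alpha(x\cdot y)=\alpha(x)\cdot\alpha(y)$ for all $x,y\in\mathcal{H}(A)$; a short induction on $n$ then yields $\alpha^{n}(x\cdot y)=\alpha^{n}(x)\cdot\alpha^{n}(y)$, so $\alpha^{n}$ is a weak morphism. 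Moreover $\alpha^{n}\circ\alpha=\alpha^{n+1}=\alpha\circ\alpha^{n}$, so $\alpha^{n}$ commutes with $\alpha$ and is therefore a genuine morphism.

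Finally, applying Theorem~\ref{wm} with $\beta=\alpha^{n}$ gives that $A_{\alpha^{n}}=A^{n}$ is a Hom-Novikov color algebra, and since $A$ is multiplicative and $\alpha^{n}$ is a morphism, the ``moreover'' clause of that theorem guarantees that $A^{n}$ is multiplicative as well. The only point requiring any care is the inductive verification that an iterate of a morphism is again a morphism; this is precisely where multiplicativity of $A$ is used, and everything else is a direct substitution into the statement of Theorem~\ref{wm}, so I do not anticipate any real obstacle. (An alternative, equally short route is induction on $n$, passing from $A^{n}$ to $A^{n+1}$ by twisting once more along $\alpha$; but the direct choice $\beta=\alpha^{n}$ keeps the argument in a single application of the theorem.)
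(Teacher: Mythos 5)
Your proposal is correct and follows exactly the route the paper intends: the corollary is stated as a direct consequence of Theorem~\ref{wm}, obtained by taking $\beta=\alpha^{n}$, which is a morphism precisely because $A$ is multiplicative and $\alpha^{n}$ commutes with $\alpha$. Your explicit verification of these points (and of the trivial case $n=0$) only makes the paper's implicit argument more complete.
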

\begin{corollary}
 Let $(A, \cdot, \varepsilon)$ be a Novikov color algebra, and let $\beta : A\rightarrow A$ be a morphism.
Then
 $$A_\beta=(A, \beta\circ\cdot, \beta\circ\alpha)$$
is a multiplicative Hom-Novikov color algebra.
\end{corollary}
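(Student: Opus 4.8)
The plan is to recognize this corollary as a direct specialization of Theorem~\ref{wm} to the case $\alpha=\mathrm{id}_A$. First I would observe that a Novikov color algebra $(A,\cdot,\varepsilon)$ is precisely a Hom-Novikov color algebra $(A,\cdot,\varepsilon,\mathrm{id}_A)$: substituting $\alpha=\mathrm{id}_A$ into identities (\ref{n1}) and (\ref{n2}) recovers exactly the defining identities of a Novikov color algebra, as noted in part c) of the remark following the definition. This lets us regard the corollary's hypothesis as an instance of the hypothesis of Theorem~\ref{wm}.

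Next I would check that the remaining hypotheses of Theorem~\ref{wm} hold. Since $\mathrm{id}_A(x\cdot y)=x\cdot y=\mathrm{id}_A(x)\cdot\mathrm{id}_A(y)$, the map $\mathrm{id}_A$ is trivially a morphism for $\cdot$, so $(A,\cdot,\varepsilon,\mathrm{id}_A)$ is multiplicative. Moreover, any morphism $\beta$ of the Novikov color algebra is in particular a weak morphism, and the compatibility condition $\beta\circ\alpha=\alpha\circ\beta$ is automatic when $\alpha=\mathrm{id}_A$; hence $\beta$ qualifies as a morphism of $(A,\cdot,\varepsilon,\mathrm{id}_A)$ in the sense required for the strengthened conclusion.

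Then I would invoke Theorem~\ref{wm} directly. With $\alpha=\mathrm{id}_A$, the twisted product is $\beta\circ\cdot$ and the twisting map is $\beta\circ\alpha=\beta\circ\mathrm{id}_A=\beta$, so the theorem yields that $A_\beta=(A,\beta\circ\cdot,\varepsilon,\beta)$ is a Hom-Novikov color algebra. Because $A$ is multiplicative and $\beta$ is a full morphism, the ``moreover'' clause of Theorem~\ref{wm} then guarantees that $A_\beta$ is multiplicative as well, which finishes the argument.

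I do not anticipate a genuine obstacle here, as the entire statement collapses to the already-established Theorem~\ref{wm}. The only point demanding care is the bookkeeping of the twisting datum: one must note that the symbol $\alpha$ appearing in the corollary's $A_\beta=(A,\beta\circ\cdot,\beta\circ\alpha)$ is tacitly $\mathrm{id}_A$, so that $\beta\circ\alpha$ simplifies to $\beta$ and the conclusion is consistent with the statement.
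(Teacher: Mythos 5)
Your proof is correct and follows exactly the route the paper intends: the corollary is presented as an immediate consequence of Theorem~\ref{wm}, obtained by viewing the Novikov color algebra as the multiplicative Hom-Novikov color algebra $(A,\cdot,\varepsilon,\mathrm{id}_A)$ and applying the theorem with the morphism $\beta$. Your remark that the $\alpha$ in the displayed $A_\beta$ is tacitly $\mathrm{id}_A$, so the twisting map is just $\beta$, is the right reading of the statement.
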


In the next result, we use centroids to provide Hom-Novikov color algebra from a given one.
\begin{proposition}
  Let $(A, \cdot, \varepsilon, \alpha)$ be a Hom-Novikov color algebra and let $\beta : A\rightarrow A$ be a centroid.
Then
 $$A'=(A, \beta\circ\cdot, \varepsilon, \alpha)$$
is a Hom-Novikov color algebra.
\end{proposition}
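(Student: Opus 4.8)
The plan is to denote the new product by $x \ast y := \beta(x \cdot y)$ and to show that both defining identities (\ref{n1}) and (\ref{n2}) for $\ast$ reduce, via the centroid property, to $\beta^2$ applied to the corresponding identities for the original product $\cdot$. First I would record that $A' = (A, \ast, \varepsilon, \alpha)$ is indeed a color Hom-algebra: since $\beta$ and $\cdot$ are even, so is $\ast$, while $\alpha$ and $\varepsilon$ are unchanged.

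The heart of the argument is the following computation. Using the centroid identities $\beta(u) \cdot v = \beta(u \cdot v) = u \cdot \beta(v)$ from Definition \ref{ctr}, together with the linearity of $\beta$, I would rewrite each of the four triple products occurring in the Novikov axioms. For instance,
$$(x \ast y) \ast \alpha(z) = \beta\big(\beta(x\cdot y)\cdot\alpha(z)\big) = \beta\big(\beta((x\cdot y)\cdot\alpha(z))\big) = \beta^2\big((x\cdot y)\cdot\alpha(z)\big),$$
where the left centroid property is used to pull $\beta$ out of the first factor. Similarly, applying the right centroid property to the second factor gives
$$\alpha(x)\ast(y\ast z) = \beta\big(\alpha(x)\cdot\beta(y\cdot z)\big) = \beta^2\big(\alpha(x)\cdot(y\cdot z)\big),$$
and the analogous identities hold for $(y\ast x)\ast\alpha(z)$ and for $\alpha(y)\ast(x\ast z)$.

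With these four expressions in hand, identity (\ref{n1}) for $\ast$ becomes $\beta^2\big((x\cdot y)\cdot\alpha(z)\big) = \varepsilon(y,z)\,\beta^2\big((x\cdot z)\cdot\alpha(y)\big)$, which follows by applying the linear map $\beta^2$ to (\ref{n1}) for $\cdot$. Likewise, (\ref{n2}) for $\ast$ reduces to $\beta^2$ applied to (\ref{n2}) for $\cdot$, and hence holds. This would complete the verification.

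I do not anticipate a genuine obstacle here: the computation is routine once the centroid identities are invoked. The only point requiring care is to select the correct form of the centroid property (left for the outer factor, right for the inner factor) in each of the four triple products, so that every term collapses to $\beta^2$ of the original expression. I note in passing that the commutation relation $\alpha\circ\beta=\beta\circ\alpha$ is not actually needed to verify the two Novikov identities themselves.
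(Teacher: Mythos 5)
Your proof is correct, and it is the same basic strategy as the paper's: a direct verification of (\ref{n1}) and (\ref{n2}) using the two centroid identities. The only difference is the direction in which $\beta$ is moved. The paper pushes $\beta$ \emph{inward} onto the factors, rewriting each triple product in the form $(\beta(x)\cdot\beta(y))\cdot\alpha(z)$, $\alpha(\beta(x))\cdot(\beta(y)\cdot z)$, etc., and then invokes the Novikov axioms at the arguments $\beta(x),\beta(y),z$ (which requires the commutation $\alpha\circ\beta=\beta\circ\alpha$ to turn $\beta(\alpha(x))$ into $\alpha(\beta(x))$, and uses that $\beta$ is even so the $\varepsilon$-factors are unchanged). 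You instead pull $\beta$ \emph{outward}, collapsing every term to $\beta^2$ of the corresponding original expression and then applying the linear map $\beta^2$ to the axioms at $x,y,z$ themselves. Your version is marginally cleaner and, as you correctly observe, makes no use of $\alpha\circ\beta=\beta\circ\alpha$ (that relation is part of Definition~\ref{ctr} in any case, so nothing is gained at the level of hypotheses, but it is a fair observation about what the computation actually consumes). Both reductions are valid; the content is identical.
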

\begin{proof}
 For any $x, y, z\in\mathcal{H}(A)$,
 \begin{eqnarray}
  (x\cdot_\beta y)\cdot_\beta\alpha(z)
&=&\beta[\beta(x\cdot y)\cdot\alpha(z)]=\beta[(\beta(x)\cdot y)\cdot\alpha(z)]\nonumber\\
&=&\varepsilon(y, z)\beta[(\beta(x)\cdot z)\cdot\alpha(y)]\quad(\mbox{by}\;\;(\ref{n1})\nonumber\\
&=&\varepsilon(y, z)\beta[\beta(x\cdot z)\cdot\alpha(y)]\nonumber\\
&=&\varepsilon(y, z)(x\cdot_\beta z)\cdot_\beta\alpha(y).\nonumber
 \end{eqnarray}
Next,
\begin{eqnarray}
 &&\qquad(x\cdot_\beta y)\cdot_\beta\alpha(z)-\alpha(x)\cdot_\beta (y\cdot_\beta z)
-\varepsilon(x, y)\Big((y\cdot_\beta x)\cdot_\beta\alpha(z)-\alpha(y)\cdot_\beta (x\cdot_\beta z)\Big)=\nonumber\\
&&=\beta[\beta(x\cdot y)\cdot\alpha(z)]-\beta[\alpha(x)\cdot\beta (y\cdot z)]
-\varepsilon(x, y)\Big(\beta[\beta(y\cdot x)\cdot\alpha(z)]-\beta[\alpha(y)\cdot\beta (x\cdot z)]\Big)
\nonumber\\
&=&\beta[(\beta(x)\cdot y)\cdot\alpha(z)]-\beta[\alpha(x)\cdot(\beta (y)\cdot z)]
-\varepsilon(x, y)\Big(\beta[(\beta(y)\cdot x)\cdot\alpha(z)]-\beta[\alpha(y)\cdot(\beta (x)\cdot z)]
\nonumber\\
&=&(\beta(x)\cdot\beta(y))\cdot\alpha(z)-\beta(\alpha(x))\cdot(\beta (y)\cdot z)
-\varepsilon(x, y)\Big((\beta(y)\cdot\beta(x))\cdot\alpha(z)-\beta(\alpha(y))\cdot(\beta (x)\cdot z)
\nonumber\\
&=&(\beta(x)\cdot\beta(y))\cdot\alpha(z)-\alpha(\beta(x))\cdot(\beta (y)\cdot z)
-\varepsilon(x, y)\Big((\beta(y)\cdot\beta(x))\cdot\alpha(z)-\alpha(\beta(y))\cdot(\beta (x)\cdot z)\Big)
\nonumber
=0.\nonumber
\end{eqnarray}
The left hand side vanishes by (\ref{n2}).  Thus $A'$ is a Hom-Novikov color algebra.
\end{proof}

Now we have the following statement.
\begin{lemma}\label{nl}
 Let $(A, \cdot, \varepsilon, \alpha)$ be a Hom-Novikov color algebra. For all $x, y, z\in\mathcal{H}(A)$, we have
\begin{eqnarray}
 \varepsilon(z, x)[x, y]\cdot\alpha(z)+\varepsilon(x, y)[y, z]\cdot\alpha(x)+\varepsilon(y, z)[z, x]\cdot\alpha(y)&=&0,\label{ln1}\\
 \varepsilon(z, x)\alpha(x)\cdot[y, z]+\varepsilon(x, y)\alpha(y)\cdot[z, x]+\varepsilon(y, z)\alpha(z)\cdot[x, y]&=&0,
\end{eqnarray}
where $[x, y]=x\cdot y-\varepsilon(x, y)y\cdot x$.
\end{lemma}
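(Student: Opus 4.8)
The plan is to prove the two identities in turn: I would establish the first one, (\ref{ln1}), using the Novikov relation (\ref{n1}) alone, and then deduce the second from the first by means of (\ref{n2}). Throughout I would lean on the bicharacter rule $\varepsilon(a,b)\varepsilon(b,a)=1$ and on bimultiplicativity to collapse coefficients.

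First I would expand the bracket in (\ref{ln1}) via $[x,y]=x\cdot y-\varepsilon(x,y)y\cdot x$, turning the left-hand side into six terms of the form $(a\cdot b)\cdot\alpha(c)$. These group naturally into three pairs according to which of $x,y,z$ occupies the leftmost slot of the product. For instance, the pair carrying $x$ on the left is $\varepsilon(z,x)(x\cdot y)\cdot\alpha(z)$ together with $-\varepsilon(y,z)\varepsilon(z,x)(x\cdot z)\cdot\alpha(y)$. Applying (\ref{n1}) in the form $(x\cdot z)\cdot\alpha(y)=\varepsilon(z,y)(x\cdot y)\cdot\alpha(z)$ and collapsing $\varepsilon(y,z)\varepsilon(z,y)=1$ shows the two terms are negatives of each other, so the pair cancels. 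The remaining two pairs (with $y$, resp.\ $z$, leftmost) cancel in exactly the same way, each time using only (\ref{n1}) and $\varepsilon(a,b)\varepsilon(b,a)=1$. Hence (\ref{ln1}) holds.

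For the second identity I would first rewrite (\ref{n2}) in the convenient rearranged form
\begin{equation*}
[x,y]\cdot\alpha(z)=\alpha(x)\cdot(y\cdot z)-\varepsilon(x,y)\,\alpha(y)\cdot(x\cdot z),
\end{equation*}
obtained by moving $\alpha(x)\cdot(y\cdot z)$ and $\varepsilon(x,y)\alpha(y)\cdot(x\cdot z)$ to opposite sides and recognizing the left member as $[x,y]\cdot\alpha(z)$. Next I would expand each bracket in the second identity, producing six terms of the form $\alpha(a)\cdot(b\cdot c)$, and group them into three pairs so that each pair matches the right-hand side of the displayed relation. Concretely, $\varepsilon(z,x)\alpha(x)\cdot(y\cdot z)$ and $-\varepsilon(z,x)\varepsilon(x,y)\alpha(y)\cdot(x\cdot z)$ combine into $\varepsilon(z,x)[x,y]\cdot\alpha(z)$, and cyclically for the other two pairs. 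The net effect is that the left-hand side of the second identity equals exactly the left-hand side of (\ref{ln1}), which has just been shown to vanish.

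The computations are entirely routine; the only delicate point is the bookkeeping of the bicharacter coefficients, where one must repeatedly invoke bimultiplicativity and $\varepsilon(a,b)\varepsilon(b,a)=1$ to simplify products such as $\varepsilon(y,z)\varepsilon(z,y)$ to $1$. I expect the main (mild) obstacle to be organizing the six-term expansions into the correct cancelling (for the first identity) and recombining (for the second) pairs, rather than any conceptual difficulty.
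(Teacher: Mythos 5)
Your proposal is correct and matches the paper's own argument essentially step for step: the paper also proves (\ref{ln1}) by expanding the brackets into six terms, grouping them into three pairs according to the leftmost factor, and cancelling each pair via (\ref{n1}) and $\varepsilon(a,b)\varepsilon(b,a)=1$; it then establishes the second identity by regrouping the expanded terms into pairs of the form $\alpha(x)\cdot(y\cdot z)-\varepsilon(x,y)\alpha(y)\cdot(x\cdot z)$ and using (\ref{n2}) to identify the whole sum with the left-hand side of (\ref{ln1}).
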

\begin{proof}
 For any $x, y, z\in\mathcal{H}(A)$, one has
\begin{eqnarray}
 \varepsilon(z, x)[x, y]\cdot\alpha(z)=\varepsilon(z, x)(x\cdot y)\cdot\alpha(z)-\varepsilon(z, x)\varepsilon(x, y)(y\cdot x)\cdot\alpha(z),\nonumber
\end{eqnarray}
and
\begin{eqnarray}
 \sum_{x, y, z}\varepsilon(z, x)[x, y]\cdot\alpha(z)
&=&\varepsilon(z, x)(x\cdot y)\cdot\alpha(z)-\varepsilon(z, x)\varepsilon(x, y)(y\cdot x)\cdot\alpha(z)
+\varepsilon(x, y)(y\cdot z)\cdot\alpha(x)\nonumber\\
&&-\varepsilon(x, y)\varepsilon(y, z)(z\cdot y)\cdot\alpha(x)
+\varepsilon(y, z)(z\cdot x)\cdot\alpha(y)-\varepsilon(y, z)\varepsilon(z, x)(x\cdot z)\cdot\alpha(y)\nonumber\\
&=&\varepsilon(z, x)\Big((x\cdot y)\cdot\alpha(z)-\varepsilon(y, z)(x\cdot z)\cdot\alpha(y)\Big)
+\varepsilon(x, y)\Big((y\cdot z)\cdot\alpha(x)\nonumber\\
&&-\varepsilon(z, x)(y\cdot x)\cdot\alpha(z)\Big)+\varepsilon(y, z)\Big((z\cdot x)\cdot\alpha(y)\varepsilon(x, y)-(z\cdot y)\cdot\alpha(x)\Big).
\nonumber
\end{eqnarray}
The left hand side vanishes by (\ref{n1}).
Similarly, one has
\begin{eqnarray}
  \sum_{x, y, z} \varepsilon(z, x)\alpha(x)\cdot[y, z]
&=&\varepsilon(z, x)\alpha(x)\cdot(y\cdot z)-\varepsilon(z, x)\varepsilon(y, z)\alpha(x)\cdot(z\cdot y)
+\varepsilon(x, y)\alpha(y)\cdot(z\cdot x)\nonumber\\
&&-\varepsilon(x, y)\varepsilon(z, x)\alpha(y)\cdot(x\cdot z)
+\varepsilon(y, z)\alpha(z)\cdot(x\cdot y)-\varepsilon(y, z)\varepsilon(x, y)\alpha(z)\cdot(y\cdot x)\nonumber\\
&=&\varepsilon(z, x)\Big(\alpha(x)\cdot(y\cdot z)-\varepsilon(x, y)\alpha(y)\cdot(x\cdot z)\Big)
+\varepsilon(x, y)\Big(\alpha(y)\cdot(z\cdot x)\nonumber\\
&&-\varepsilon(y, z)\alpha(z)\cdot(y\cdot x)\Big)
+\varepsilon(y, z)\Big(\alpha(z)\cdot(x\cdot y)-\varepsilon(z, x)\alpha(x)\cdot(z\cdot y)\Big)\nonumber.
\end{eqnarray}
By axiom (\ref{n2}), 
\begin{eqnarray}
\sum_{x, y, z} \varepsilon(z, x)\alpha(x)\cdot[y, z]
&=&\varepsilon(z, x)\Big((x\cdot y)\cdot\alpha(z)-\varepsilon(x, y)(y\cdot x)\cdot\alpha(z)\Big)
+\varepsilon(x, y)\Big((y\cdot z)\cdot\alpha(x)\nonumber\\
&&-\varepsilon(y, z)(z\cdot y)\cdot\alpha(x)\Big)
+\varepsilon(y, z)\Big((z\cdot x)\cdot\alpha(y)-\varepsilon(z, x)(x\cdot z)\cdot\alpha(y)\Big)\nonumber\\
&=&\varepsilon(z, x)[x, y]\cdot\alpha(z)+\varepsilon(x, y)[y, z]\cdot\alpha(x)+\varepsilon(y, z)[z, x]\cdot\alpha(y).\nonumber
\end{eqnarray}
Which vanishes by (\ref{ln1}). Thus the conclusion holds.
\end{proof}
We are now ready to state the theorem.
\begin{theorem}\label{nlt}
  Let $(A, \cdot, \varepsilon, \alpha)$ be a Hom-Novikov color algebra. Then $HL(A)=(A, [-, -], \varepsilon, \alpha)$ is a Hom-Lie color algebra,
 with $[x, y]=x\cdot y-\varepsilon(x, y)y\cdot x$, for all $x, y \in\mathcal{H}(A)$.
\end{theorem}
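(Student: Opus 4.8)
The plan is to verify the two defining axioms of a Hom-Lie color algebra for the bracket $[x,y]=x\cdot y-\varepsilon(x,y)y\cdot x$, namely the $\varepsilon$-skew-symmetry (\ref{ss}) and the $\varepsilon$-Hom-Jacobi identity (\ref{chli}), and to reduce the latter entirely to the two identities already established in Lemma~\ref{nl}.

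First I would dispose of the skew-symmetry, which is immediate. By definition,
$$-\varepsilon(x,y)[y,x]=-\varepsilon(x,y)\big(y\cdot x-\varepsilon(y,x)\,x\cdot y\big)=-\varepsilon(x,y)\,y\cdot x+\varepsilon(x,y)\varepsilon(y,x)\,x\cdot y,$$
and since $\varepsilon(x,y)\varepsilon(y,x)=1$ by axiom (i) of the bicharacter, this equals $x\cdot y-\varepsilon(x,y)\,y\cdot x=[x,y]$. Thus (\ref{ss}) holds for all homogeneous $x,y$.

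The core of the argument is the Jacobi identity, and the idea is to split each summand $[\alpha(x),[y,z]]$ into its two halves and recognize the resulting cyclic sums as the two lines of Lemma~\ref{nl}. Since $\alpha$ is even, $[y,z]$ is homogeneous of degree $\deg y+\deg z$, so rule (ii) of the bicharacter gives $\varepsilon(\alpha(x),[y,z])=\varepsilon(x,y)\varepsilon(x,z)$, whence
$$[\alpha(x),[y,z]]=\alpha(x)\cdot[y,z]-\varepsilon(x,y)\varepsilon(x,z)\,[y,z]\cdot\alpha(x).$$
Multiplying by $\varepsilon(z,x)$ and summing cyclically, the first halves produce exactly $\sum_{x,y,z}\varepsilon(z,x)\,\alpha(x)\cdot[y,z]$, which is the left-hand side of the second identity of Lemma~\ref{nl} and therefore vanishes. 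For the second halves, the coefficient $\varepsilon(z,x)\varepsilon(x,y)\varepsilon(x,z)$ collapses to $\varepsilon(x,y)$ because $\varepsilon(z,x)\varepsilon(x,z)=1$; writing out the three cyclic terms of $\sum_{x,y,z}\varepsilon(x,y)[y,z]\cdot\alpha(x)$ shows it coincides with the left-hand side of (\ref{ln1}), which is also zero. Adding the two contributions gives $\sum_{x,y,z}\varepsilon(z,x)[\alpha(x),[y,z]]=0$, establishing (\ref{chli}).

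I expect no genuine obstacle at this stage: all the substantive content has already been absorbed into Lemma~\ref{nl}, so the remaining work is purely cyclic bookkeeping. The only two points demanding care are the degree computation that yields $\varepsilon(\alpha(x),[y,z])=\varepsilon(x,y)\varepsilon(x,z)$ and the cancellation $\varepsilon(z,x)\varepsilon(x,z)=1$ that realigns the coefficients; once these are in place, the two cyclic sums match the two lines of the lemma exactly and the theorem follows.
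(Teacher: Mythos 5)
Your proof is correct and follows essentially the same route as the paper: split each term $[\alpha(x),[y,z]]$ into its two halves, use $\varepsilon(x,y+z)=\varepsilon(x,y)\varepsilon(x,z)$ and $\varepsilon(z,x)\varepsilon(x,z)=1$ to collapse the coefficients, and recognize the two resulting cyclic sums as the two identities of Lemma~\ref{nl}. The only difference is that you also check the $\varepsilon$-skew-symmetry explicitly, which the paper leaves implicit.
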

\begin{proof}
 Let $x, y, z$ be any three homogeneous elements in $A$, 
\begin{eqnarray}
 \varepsilon(z, x)[\alpha(x), [y, z]]=\varepsilon(z, x)\alpha(x)\cdot[y, z]-\varepsilon(z, x)\varepsilon(x, y+z)[y, z]\cdot\alpha(x).\nonumber
\end{eqnarray}
Then
\begin{eqnarray}
\sum_{x, y, z} \varepsilon(z, x)[\alpha(x), [y, z]]
&=&\varepsilon(z, x)\alpha(x)\cdot[y, z]-\varepsilon(z, x)\varepsilon(x, y+z)[y, z]\cdot\alpha(x)
+\varepsilon(x, y)\alpha(y)\cdot[z, x]\nonumber\\
&&-\varepsilon(x, y)\varepsilon(y, z+x)[z, x]\cdot\alpha(y)
 +\varepsilon(y, z)\alpha(z)\cdot[x, y]-\varepsilon(y, z)\varepsilon(z, x+y)[x, y]\cdot\alpha(z)\nonumber\\
&=&    \varepsilon(z, x)\alpha(x)\cdot[y, z]+\varepsilon(x, y)\alpha(y)\cdot[z, x]+\varepsilon(y, z)\alpha(z)\cdot[x, y]\nonumber\\
&&-\varepsilon(x, y)[y, z]\cdot\alpha(x)-\varepsilon(y, z)[z, x]\cdot\alpha(y)-\varepsilon(z, x)[x, y]\cdot\alpha(z)\nonumber.
\end{eqnarray}
By Lemma \ref{nl}, the left hand side vanishes. 
\end{proof}

The below result is the color version of Proposition 3.4 and Proposition 3.5 of \cite{YY}.
\begin{proposition}\label{icn}
 If $(A, \cdot, \varepsilon, \alpha)$ is 
\begin{enumerate}
 \item [(1)]
an {\it involutive multiplicative} Hom-Novikov color algebra, then $(A, \alpha\circ\cdot, \varepsilon)$ is a Novikov color algebra.
\item [(2)]
a {\it regular} Hom-Novikov color algebra, then $(A, [-, -]_{\alpha^{-1}}={\alpha^{-1}}\circ [-, -], \varepsilon)$ is a Lie color algebra, where
$[x, y]=x\cdot y-\varepsilon(x, y)y\cdot x$, for all $x, y \in\mathcal{H}(A)$. In particular, if $\alpha$ is an {\it involution}, then 
$(A, [-, -]_{\alpha}={\alpha}\circ [-, -], \varepsilon)$ is a Lie color algebra.
\end{enumerate}
\end{proposition}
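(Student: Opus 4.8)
The plan is, in each case, to rewrite the defining identities of the target structure (which is just the $\alpha=\mathrm{id}$ specialization of the corresponding Hom-structure, by the Remark following the definition) and reduce them to identities already available for $(A,\cdot,\varepsilon,\alpha)$.

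For part (1), I would write the new product as $x\ast y=\alpha(x\cdot y)$. Since $A$ is multiplicative and involutive ($\alpha^2=\mathrm{id}_A$), the first step is to record the two reductions $(x\ast y)\ast z=\alpha\big(\alpha(x\cdot y)\cdot z\big)=\alpha^2(x\cdot y)\cdot\alpha(z)=(x\cdot y)\cdot\alpha(z)$ and, likewise, $x\ast(y\ast z)=\alpha\big(x\cdot\alpha(y\cdot z)\big)=\alpha(x)\cdot\alpha^2(y\cdot z)=\alpha(x)\cdot(y\cdot z)$, each obtained by pushing $\alpha$ through the product via multiplicativity and then collapsing $\alpha^2$ to the identity. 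With these two formulas in hand, the first Novikov axiom for $\ast$, namely $(x\ast y)\ast z=\varepsilon(y,z)(x\ast z)\ast y$, becomes exactly (\ref{n1}), while the second axiom $(x\ast y)\ast z-x\ast(y\ast z)=\varepsilon(x,y)\big((y\ast x)\ast z-y\ast(x\ast z)\big)$ becomes exactly (\ref{n2}). Hence $(A,\ast,\varepsilon)$ is a Novikov color algebra; the only care needed here is to track where multiplicativity and where involutivity is used.

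For part (2), regularity makes $\alpha$ an automorphism, so $\alpha^{-1}$ exists and, being the inverse of a bracket morphism, satisfies $[x,y]_{\alpha^{-1}}=\alpha^{-1}([x,y])=[\alpha^{-1}(x),\alpha^{-1}(y)]$. The $\varepsilon$-skew-symmetry (\ref{ss}) of $[-,-]_{\alpha^{-1}}$ then follows immediately by applying the linear map $\alpha^{-1}$ to the $\varepsilon$-skew-symmetry of $[-,-]$. The substantial step is the ordinary $\varepsilon$-Jacobi identity, i.e.\ (\ref{chli}) with $\alpha=\mathrm{id}$. Setting $a=\alpha^{-1}(x)$, $b=\alpha^{-1}(y)$, $c=\alpha^{-1}(z)$, I would compute $[x,[y,z]_{\alpha^{-1}}]_{\alpha^{-1}}=\alpha^{-1}\big[\alpha(a),[b,c]\big]$, using that $\alpha^{-1}$ is a bracket morphism and $x=\alpha(a)$; since $\alpha$ is even, the bicharacter coefficients are unchanged under this substitution. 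The cyclic sum over $x,y,z$ then equals $\alpha^{-1}$ applied to $\sum_{a,b,c}\varepsilon(c,a)[\alpha(a),[b,c]]$, which vanishes by the $\varepsilon$-Hom-Jacobi identity of the Hom-Lie color algebra $HL(A)$ furnished by Theorem \ref{nlt}; as $\alpha^{-1}(0)=0$, the identity holds. The involutive special case is then immediate, since $\alpha^{-1}=\alpha$ forces $[-,-]_{\alpha^{-1}}=\alpha\circ[-,-]$.

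The main obstacle is the bookkeeping in part (2): one must verify carefully that replacing the outer argument $x$ by $\alpha\big(\alpha^{-1}(x)\big)$ inside a single bracket slot is precisely what converts the nested twisted bracket $[x,[y,z]_{\alpha^{-1}}]_{\alpha^{-1}}$ into the Hom-Jacobi summand $\alpha^{-1}[\alpha(a),[b,c]]$, and that the $\varepsilon$-factors align after the degree-preserving substitution $x\mapsto a$, $y\mapsto b$, $z\mapsto c$. Once this reduction is set up, the vanishing is an immediate consequence of Theorem \ref{nlt}.
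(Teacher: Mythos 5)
Your proposal is correct, and it simply fills in the details of the verification that the paper dismisses with ``It is straightforward'': part (1) reduces the Novikov axioms for $\alpha\circ\cdot$ to (\ref{n1}) and (\ref{n2}) via multiplicativity and $\alpha^2=\mathrm{id}$, and part (2) pulls the untwisted $\varepsilon$-Jacobi identity back to the $\varepsilon$-Hom-Jacobi identity of $HL(A)$ through the bracket automorphism $\alpha^{-1}$. This is exactly the intended argument (the color version of Propositions 3.4 and 3.5 of \cite{YY}), so there is nothing to add.
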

\begin{proof}
 It is straightforward.
\end{proof}
\begin{theorem}
 Let $(A, \cdot, \varepsilon, \alpha)$ be a commutative Hom-Novikov color algebra equiped with an averaging operator $\partial$. Then $A$ is a Hom-Novikov
color algebra with respect to the multiplication $\ast : A\times A\rightarrow A$ defined by
 $$x\ast y:=x\cdot\partial(y)$$
and the same twisting map $\alpha$.
\end{theorem}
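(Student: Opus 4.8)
The plan is to reduce the whole statement to one structural observation: a commutative Hom-Novikov color algebra is automatically Hom-associative. Once that is granted, both axioms (\ref{n1}) and (\ref{n2}) for the twisted product $x\ast y=x\cdot\partial(y)$ follow almost formally, and all the genuine work is concentrated in this preliminary fact.

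First I would record the three properties of the averaging operator $\partial$ that I intend to use, all contained in Definition \ref{bk3}: it is even, it commutes with $\alpha$ (so $\partial(\alpha(z))=\alpha(\partial(z))$), and it satisfies the right-averaging identity $\partial(u\cdot\partial(v))=\partial(u)\cdot\partial(v)$. Evenness gives $\varepsilon(\partial(u),v)=\varepsilon(u,v)$ for homogeneous $u,v$, which I will use silently throughout.

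The crux is the claim that $(A,\cdot,\varepsilon,\alpha)$, being commutative Hom-Novikov, satisfies $(x\cdot y)\cdot\alpha(z)=\alpha(x)\cdot(y\cdot z)$. To obtain it I would first feed commutativity into (\ref{n2}): since $x\cdot y=\varepsilon(x,y)(y\cdot x)$, the term $(x\cdot y)\cdot\alpha(z)$ cancels on both sides and (\ref{n2}) collapses to the simpler relation $\alpha(x)\cdot(y\cdot z)=\varepsilon(x,y)\,\alpha(y)\cdot(x\cdot z)$. Then, starting from $(x\cdot y)\cdot\alpha(z)$, I would apply (\ref{n1}) to rewrite it as $\varepsilon(y,z)(x\cdot z)\cdot\alpha(y)$, use commutativity to move $\alpha(y)$ to the left, and finally apply the collapsed form of (\ref{n2}); the accumulated bicharacter factor is $\varepsilon(y,z)\varepsilon(x+z,y)\varepsilon(y,x)$, which telescopes to $1$ via $\varepsilon(a,b)\varepsilon(b,a)=1$, leaving exactly $\alpha(x)\cdot(y\cdot z)$. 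This is the color analogue of the classical fact that a commutative Novikov algebra is associative, and it is the step I expect to carry the argument. The main obstacle is purely the bookkeeping of these bicharacter factors; no deeper idea is needed.

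With Hom-associativity of $\cdot$ in hand, the rest is short. For (\ref{n1}) of $\ast$: expanding and using $\partial\alpha=\alpha\partial$, one has $(x\ast y)\ast\alpha(z)=(x\cdot\partial(y))\cdot\alpha(\partial(z))$, which by (\ref{n1}) for $\cdot$ applied to $x,\partial(y),\partial(z)$ equals $\varepsilon(y,z)(x\cdot\partial(z))\cdot\alpha(\partial(y))=\varepsilon(y,z)(x\ast z)\ast\alpha(y)$. For (\ref{n2}) of $\ast$ I would show that $\ast$ is itself Hom-associative, so that both sides of (\ref{n2}) vanish termwise: indeed $(x\ast y)\ast\alpha(z)=(x\cdot\partial(y))\cdot\alpha(\partial(z))$, while $\alpha(x)\ast(y\ast z)=\alpha(x)\cdot\partial(y\cdot\partial(z))=\alpha(x)\cdot(\partial(y)\cdot\partial(z))$ after the right-averaging identity, and these agree by the Hom-associativity of $\cdot$ on $x,\partial(y),\partial(z)$. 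Replacing $(x,y)$ by $(y,x)$ shows the bracketed term on the right of (\ref{n2}) vanishes as well, so (\ref{n2}) holds trivially, and $(A,\ast,\varepsilon,\alpha)$ is a Hom-Novikov color algebra.
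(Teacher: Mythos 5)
Your proof is correct, and its computational skeleton coincides with the paper's: axiom (\ref{n1}) for $\ast$ is checked by a direct expansion using $\partial\circ\alpha=\alpha\circ\partial$ and evenness of $\partial$, and axiom (\ref{n2}) is obtained by showing that $\ast$ is Hom-associative via the right-averaging identity $\partial(y\cdot\partial(z))=\partial(y)\cdot\partial(z)$, so that both sides of (\ref{n2}) vanish separately. The one genuine difference is your preliminary lemma, and it is a real improvement: the paper's proof silently uses the Hom-associativity $(x\cdot\partial(y))\cdot\alpha(\partial(z))=\alpha(x)\cdot(\partial(y)\cdot\partial(z))$, i.e.\ it invokes (\ref{hcc}), which is stated only for \emph{commutative Hom-associative} color algebras, whereas the theorem's hypothesis is \emph{commutative Hom-Novikov}. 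You close that gap by proving that $\varepsilon$-commutativity collapses (\ref{n2}) to $\alpha(x)\cdot(y\cdot z)=\varepsilon(x,y)\,\alpha(y)\cdot(x\cdot z)$ and then chaining this with (\ref{n1}) and commutativity; your bookkeeping of the factor $\varepsilon(y,z)\varepsilon(x+z,y)\varepsilon(y,x)=1$ is right, so the claimed Hom-associativity does hold. In short: same route, but your version is self-contained where the paper's relies on an unstated color analogue of ``commutative Novikov implies associative.'' (A small stylistic remark: for (\ref{n1}) of $\ast$ you apply (\ref{n1}) of $\cdot$ directly to $x,\partial(y),\partial(z)$, which is even slightly more economical than the paper's detour through Hom-associativity and commutativity.)
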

\begin{proof}
We have to prove conditions (\ref{n1}) and (\ref{n2}) for $\ast$.  For any $x, y, z\in\mathcal{H}(A)$,
\begin{eqnarray}
 (x\ast y)\ast\alpha(z)
&=&(x\cdot\partial(y))\ast\alpha(z)
=(x\cdot \partial(y))\partial(\alpha(z))
=(x\cdot \partial(y))\alpha(\partial(z)
=\alpha(x)\cdot (\partial(y)\cdot\partial(z))\nonumber\\
&=&\varepsilon(y, z)\alpha(x)\cdot (\partial(z)\cdot\partial(y))
=\varepsilon(y, z)(x\cdot \partial(z))\cdot\alpha(\partial(y))
=\varepsilon(y, z)(x\cdot \partial(z))\cdot\partial(\alpha(y))\nonumber\\
&=&\varepsilon(y, z)(x\cdot \partial(z))\ast\alpha(y)
=\varepsilon(y, z)(x\ast z)\ast\alpha(y)\nonumber.
\end{eqnarray}
Next, 
\begin{eqnarray}
(x\ast y)\ast\alpha(z)-\alpha(x)\ast(y\ast z)
&=&\alpha(x)\cdot (\partial(y)\cdot\partial(z))-\alpha(x)\cdot\partial(y\cdot\partial(z))\nonumber\\
&=&\alpha(x)\cdot (\partial(y)\cdot\partial(z))-\alpha(x)\cdot(\partial(y)\cdot\partial(z))\nonumber\\
&=&0.\nonumber
\end{eqnarray}
Echanging the role of $x$ and $y$, it follows that
\begin{eqnarray}
 (x\ast y)\ast\alpha(z)-\alpha(x)\ast(y\ast z)=\varepsilon(x, y)\Big((y\ast x)\ast\alpha(z)-\alpha(y)\ast(x\ast z)\Big)=0.\nonumber
\end{eqnarray}
This completes the proof.
\end{proof}
\begin{theorem}\label{nk1}
 Let $(A, \cdot, \varepsilon, \alpha)$ be a commutative Hom-associative color algebra and $\partial: A\rightarrow A$ be an even derivation commutating
 with $\alpha$. Then the quadruple $(A, \ast, \varepsilon, \alpha)$ is a Hom-Novikov color algebra with respect to the product
$$x\ast y:=x\cdot \partial(y),$$
for all $x, y\in\mathcal{H}(A)$.
\end{theorem}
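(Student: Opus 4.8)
The plan is to verify the two defining identities (\ref{n1}) and (\ref{n2}) for the deformed product $x\ast y=x\cdot\partial(y)$ directly, by rewriting every occurrence of $\ast$ in terms of the original product $\cdot$ and then repeatedly applying the three structural hypotheses: $\varepsilon$-commutativity, Hom-associativity (\ref{hass}), and the facts that $\partial$ is an even derivation commuting with $\alpha$. A preliminary observation that keeps all the bicharacter bookkeeping harmless is that $\partial$ is even, so $\partial(u)$ has the same degree as $u$; consequently $\varepsilon(\partial(u),\partial(v))=\varepsilon(u,v)$, and $\partial^2(z)$ and $\alpha(\cdot)$ carry the same degrees as their arguments. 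This lets me freely slide $\partial$ and $\alpha$ past the entries of $\varepsilon$.

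For (\ref{n1}) the computation is essentially a single application of the chain of equalities (\ref{hcc}), valid in any commutative Hom-associative color algebra. Expanding $(x\ast y)\ast\alpha(z)=(x\cdot\partial(y))\cdot\partial(\alpha(z))$ and using $\partial\alpha=\alpha\partial$ gives $(x\cdot\partial(y))\cdot\alpha(\partial(z))$, to which (\ref{hcc}) applies with arguments $\partial(y),\partial(z)$, yielding $\varepsilon(y,z)(x\cdot\partial(z))\cdot\alpha(\partial(y))=\varepsilon(y,z)(x\ast z)\ast\alpha(y)$. This is exactly (\ref{n1}), and it uses only commutativity and Hom-associativity, not the derivation property.

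The heart of the argument is (\ref{n2}), and the key step is to compute the $\ast$-associator $(x\ast y)\ast\alpha(z)-\alpha(x)\ast(y\ast z)$. The first term is again $\alpha(x)\cdot(\partial(y)\cdot\partial(z))$, while the second expands, via the Leibniz rule for $\partial$, as $\alpha(x)\cdot\partial\big(y\cdot\partial(z)\big)=\alpha(x)\cdot(\partial(y)\cdot\partial(z))+\alpha(x)\cdot(y\cdot\partial^2(z))$. The two matching terms cancel, leaving the remarkably simple expression $-\,\alpha(x)\cdot(y\cdot\partial^2(z))$. This collapse is the crucial point: the whole associator is governed by the single quantity $\alpha(x)\cdot(y\cdot\partial^2(z))$.

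It then remains to show that this quantity is $\varepsilon(x,y)$-symmetric in $x$ and $y$, i.e. $\alpha(x)\cdot(y\cdot\partial^2(z))=\varepsilon(x,y)\,\alpha(y)\cdot(x\cdot\partial^2(z))$; granting this, the right-hand side of (\ref{n2}) equals $-\varepsilon(x,y)\,\alpha(y)\cdot(x\cdot\partial^2(z))$ and (\ref{n2}) follows at once. To prove the symmetry I would push both sides through (\ref{hass}) to get $(x\cdot y)\cdot\alpha(\partial^2(z))$ and $\varepsilon(x,y)(y\cdot x)\cdot\alpha(\partial^2(z))$ respectively, and then use $\varepsilon$-commutativity together with $\varepsilon(x,y)\varepsilon(y,x)=1$ to collapse $\varepsilon(x,y)(y\cdot x)=x\cdot y$. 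The only genuine obstacle is disciplined tracking of the degrees inside the bicharacter, since the cancellation $\varepsilon(x,y)\varepsilon(y,x)=1$ lines up precisely because $\partial$ and $\alpha$ are even. The indispensable hypotheses are thus commutativity (for the swaps) and that $\partial$ be a \emph{degree-zero} derivation commuting with $\alpha$, so that the Leibniz expansion produces exactly the two terms above and $\partial$ slides past $\alpha$.
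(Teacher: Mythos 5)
Your proposal is correct and follows essentially the same route as the paper's own proof: identity (\ref{n1}) via the chain (\ref{hcc}) applied to $\partial(y),\partial(z)$, and identity (\ref{n2}) by reducing the $\ast$-associator to $-\alpha(x)\cdot(y\cdot\partial^2(z))$ through the Leibniz cancellation and then establishing its $\varepsilon(x,y)$-symmetry via Hom-associativity and $\varepsilon$-commutativity. The only cosmetic difference is that you make the degree bookkeeping for the even maps $\partial$ and $\alpha$ explicit, which the paper leaves tacit.
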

\begin{proof}
 For all $x, y, z\in \mathcal{H}(A)$, and by (\ref{hcc}),
\begin{eqnarray}
 (x\ast y)\ast\alpha(z)=(x\cdot\partial(y))\cdot(\partial(\alpha(z)))=(x\cdot\partial(y))\cdot\alpha(\partial(z))=
\varepsilon(y, z)(x\cdot\partial(z))\cdot\alpha(\partial(y))=\varepsilon(y, z) (x\ast z)\ast\alpha(y).\nonumber
\end{eqnarray}
For proving the second condition, one has 
\begin{eqnarray}
 (x\ast y)\ast\alpha(z)-\alpha(x)\ast(y\ast z)
&=&(x\cdot\partial(y))\cdot(\partial(\alpha(z)))-\alpha(x)\cdot\partial(y\cdot\partial(z))\nonumber\\
&=&(x\cdot\partial(y))\cdot(\partial(\alpha(z)))-\alpha(x)\cdot(\partial(y)\cdot\partial(z))-\alpha(x)\cdot (y\cdot\partial^2(z))\nonumber\\
&=&-\alpha(x)\cdot (y\cdot\partial^2(z))\nonumber.
\end{eqnarray}
And, by echanging the role of $x$ and $y$,
\begin{eqnarray}
 (y\ast x)\ast\alpha(z)-\alpha(y)\ast(x\ast z)=-\alpha(y)\cdot (x\cdot\partial^2(z))\nonumber.
\end{eqnarray}
Then
\begin{eqnarray}
 (x\ast y)\ast\alpha(z)-\alpha(x)\ast(y\ast z)
&=&-(x\cdot y)\cdot(\alpha(\partial^2(z)))\nonumber\\
&=&-\varepsilon(x, y)(y\cdot x)\cdot(\alpha(\partial^2(z)))\nonumber\\
&=&-\varepsilon(x, y)\alpha(y)\cdot (x\cdot\partial^2(z))\nonumber\\
&=&\varepsilon(x, y)\Big((y\ast x)\ast\alpha(z)-\alpha(y)\ast(x\ast z)\Big)\nonumber.
\end{eqnarray}
This ends the proof.
\end{proof}
\begin{corollary}
 Let $(A, \cdot, \varepsilon)$ be a commutative associative color algebra, $\alpha : A \rightarrow A$ be an algebra morphism and
$D$ be a derivation of $A$ commutating with $\alpha$. Then  $(A, \ast, \varepsilon, \alpha)$ is a Hom-Novikov color algebra, where
$$x\ast y:=\alpha(x\cdot\partial(y))$$
for $x, y\in\mathcal{H}(A)$.
\end{corollary}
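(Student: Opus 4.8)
The plan is to deduce the corollary by chaining the two preceding twisting results, rather than verifying the Hom-Novikov axioms (\ref{n1}) and (\ref{n2}) directly. First I would observe that a commutative associative color algebra $(A, \cdot, \varepsilon)$ is the same thing as a commutative Hom-associative color algebra with twisting map $id_A$, since Hom-associativity (\ref{hass}) collapses to ordinary associativity when $\alpha=id_A$. Because any even derivation commutes with $id_A$, Theorem \ref{nk1} applies to the quadruple $(A, \cdot, \varepsilon, id_A)$ together with the given derivation $\partial$, and produces a Hom-Novikov color algebra $(A, \bullet, \varepsilon, id_A)$ with product $x\bullet y:=x\cdot\partial(y)$.

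Next I would twist this intermediate algebra by $\alpha$ via Theorem \ref{wm}. The only step requiring computation is to check that $\alpha$ is a weak morphism of $(A, \bullet, \varepsilon, id_A)$; for homogeneous $x,y\in\mathcal{H}(A)$,
\begin{eqnarray}
\alpha(x\bullet y)=\alpha(x\cdot\partial(y))=\alpha(x)\cdot\alpha(\partial(y))=\alpha(x)\cdot\partial(\alpha(y))=\alpha(x)\bullet\alpha(y),\nonumber
\end{eqnarray}
where the second equality uses that $\alpha$ is an algebra morphism for $\cdot$ and the third uses $\alpha\circ\partial=\partial\circ\alpha$. Thus the two hypotheses imposed on $\alpha$ in the statement are precisely what is needed to make it a weak morphism for the twisted product $\bullet$.

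Finally I would invoke Theorem \ref{wm} with $\beta=\alpha$ applied to $(A, \bullet, \varepsilon, id_A)$. Its conclusion delivers the Hom-Novikov color algebra $A_\alpha=(A, \alpha\circ\bullet, \alpha\circ id_A)$. Here the twisting map simplifies to $\alpha\circ id_A=\alpha$, and the twisted product sends $(x,y)$ to $\alpha(x\bullet y)=\alpha(x\cdot\partial(y))=x\ast y$. Therefore $(A, \ast, \varepsilon, \alpha)$ is a Hom-Novikov color algebra, as asserted.

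I do not anticipate a genuine obstacle in this argument; the single delicate point is the weak-morphism verification above, which is exactly where both the morphism property of $\alpha$ and the commutation $\alpha\partial=\partial\alpha$ are consumed. One could instead give a direct proof by expanding $x\ast y=\alpha(x\cdot\partial(y))$ into (\ref{n1}) and (\ref{n2}) and invoking commutativity, associativity, the Leibniz rule for $\partial$, and $\alpha\partial=\partial\alpha$, but this is substantially longer and essentially re-derives Theorems \ref{nk1} and \ref{wm} inline.
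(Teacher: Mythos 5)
Your proof is correct and follows exactly the route the paper intends: the paper's own proof is the one-line remark that the corollary ``comes from Theorem \ref{nk1} and Theorem \ref{wm} for $\alpha=id_A$'', i.e.\ first produce the Novikov color algebra $x\bullet y=x\cdot\partial(y)$ via Theorem \ref{nk1} with identity twist, then push it forward along the weak morphism $\alpha$ via Theorem \ref{wm}. Your write-up usefully supplies the one detail the paper leaves implicit, namely the verification that $\alpha$ is a weak morphism for $\bullet$, which is where the hypotheses that $\alpha$ is an algebra morphism and that $\alpha\circ\partial=\partial\circ\alpha$ are actually used.
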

\begin{proof}
 It comes from theorem \ref{nk1} and Theorem \ref{wm} for $\alpha=id_A$.
\end{proof}
For a Hom-Lie color algebra $(L, [-, -], \varepsilon, \alpha)$,  write $Z(\alpha(L))$ for the subset $L$ consisting of elements  $x$ such that $[x, \alpha(y)]=0$ for all $x, y\in L$.
Clearly, if $L$ is a Lie color algebra (i.e. $\alpha=id$), then  $Z(id(L))=Z(L)$ is the center of $L$.
\begin{proposition}
 Let $(L, [-, -], \varepsilon, \alpha)$ be a (multiplicative) Hom-Lie color algebra and $f : L\rightarrow L$ an even linear map such that
$f\circ\alpha=\alpha\circ f$. Define the product $\ast : A\times A\rightarrow A$ by
$$x\ast y=[f(x), y].$$
Then $(L, \ast, \varepsilon, \alpha)$ is a Hom-Novikov color algebra if and only if the following conditions hold for any
$x, y, z\in\mathcal{H}(A)$ :
\begin{eqnarray}
 f([f(x), y]+[x, f(y)])-[f(x), f(y)]\in Z(\alpha(L)) \label{f}
\end{eqnarray}
and 
\begin{eqnarray}
 [f([f(x), y]), \alpha(z)]=\varepsilon(y, z)[f([f(x), z]), \alpha(y)] \label{g}
\end{eqnarray}
\end{proposition}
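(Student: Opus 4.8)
The plan is to check the two Hom-Novikov axioms (\ref{n1}) and (\ref{n2}) separately for the product $x\ast y=[f(x),y]$, and to show that the first is equivalent to (\ref{g}) while the second is equivalent to (\ref{f}); since a Hom-Novikov color algebra is exactly (\ref{n1}) together with (\ref{n2}), this yields the stated equivalence.

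First I would unravel (\ref{n1}). Using the definition of $\ast$ and the commutation $f\circ\alpha=\alpha\circ f$, one computes $(x\ast y)\ast\alpha(z)=[f([f(x),y]),\alpha(z)]$ and $\varepsilon(y,z)(x\ast z)\ast\alpha(y)=\varepsilon(y,z)[f([f(x),z]),\alpha(y)]$. Thus axiom (\ref{n1}) for $\ast$ reads verbatim as condition (\ref{g}), so $(\ref{n1})\Leftrightarrow(\ref{g})$ with no further work.

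Next I would treat (\ref{n2}). Writing the associator $\mathrm{as}_\ast(x,y,z)=(x\ast y)\ast\alpha(z)-\alpha(x)\ast(y\ast z)=[f([f(x),y]),\alpha(z)]-[\alpha(f(x)),[f(y),z]]$ (again via $f\alpha=\alpha f$), axiom (\ref{n2}) asserts $D:=\mathrm{as}_\ast(x,y,z)-\varepsilon(x,y)\,\mathrm{as}_\ast(y,x,z)=0$. I would split $D$ into the two terms in which $f$ is applied outermost and the two double-bracket terms. For the first pair, the $\varepsilon$-skew-symmetry (\ref{ss}) gives $[f(x),y]-\varepsilon(x,y)[f(y),x]=[f(x),y]+[x,f(y)]$, so these collapse to $[f([f(x),y]+[x,f(y)]),\alpha(z)]$, which is the first piece of $A(x,y)$.

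The hard part will be the double-bracket remainder $-[\alpha(f(x)),[f(y),z]]+\varepsilon(x,y)[\alpha(f(y)),[f(x),z]]$. Here I would apply the $\varepsilon$-Hom-Jacobi identity (\ref{chli}) to the homogeneous triple $(f(x),f(y),z)$, solve it for the mixed term $\varepsilon(x,y)[\alpha(f(y)),[z,f(x)]]$, and substitute after rewriting $[f(x),z]=-\varepsilon(x,z)[z,f(x)]$; after cancelling the $\varepsilon$-factors (using $\varepsilon(a,b)\varepsilon(b,a)=1$) this reduces the two double brackets to $-[[f(x),f(y)],\alpha(z)]$, supplying the remaining piece of $A(x,y)$. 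Combining the two parts gives $D=[A(x,y),\alpha(z)]$ with $A(x,y)=f([f(x),y]+[x,f(y)])-[f(x),f(y)]$. Since $[A(x,y),\alpha(z)]$ vanishes for every homogeneous $z$ precisely when $A(x,y)\in Z(\alpha(L))$, axiom (\ref{n2}) is equivalent to (\ref{f}). Together with the first step, this establishes that $(L,\ast,\varepsilon,\alpha)$ is a Hom-Novikov color algebra if and only if (\ref{f}) and (\ref{g}) hold.
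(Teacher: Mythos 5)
Your proposal is correct and follows essentially the same route as the paper: axiom (\ref{n1}) for $\ast$ is read off verbatim as (\ref{g}), while axiom (\ref{n2}) is reduced, via the $\varepsilon$-skew-symmetry (to merge the two outer-$f$ terms into $f([f(x),y]+[x,f(y)])$) and the $\varepsilon$-Hom-Jacobi identity applied to $(f(x),f(y),z)$ (to turn the double brackets into $-[[f(x),f(y)],\alpha(z)]$), to the condition $[f([f(x),y]+[x,f(y)])-[f(x),f(y)],\alpha(z)]=0$ for all $z$, i.e.\ membership in $Z(\alpha(L))$. The $\varepsilon$-factor bookkeeping in your sketch checks out, so no gap remains.
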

\begin{proof}
 It is clear that $\alpha$ is a morphism with respect to $\ast$. Then
the condition (\ref{n2}) for the multiplication $\ast$ means
$$ [f([f(x), y]), \alpha(z)]-[f(\alpha(x)), [f(y), z]]=\varepsilon(x, y)[f([f(y), x]), \alpha(z)]-\varepsilon(x, y)[f(\alpha(y)), [f(x), z]].$$
Using the fact that $f$commutes with $\alpha$ and the $\varepsilon$-skew-symmetry of the bracket $[-, -]$,
\begin{eqnarray}
[f([f(x), y])+f([x, f(y)]), \alpha(z)]
&=&[\alpha(f(x)), [f(y), z]]+\varepsilon(x, y)\varepsilon(x, z)[\alpha(f(y)), [z, f(x)]]\nonumber\\
&=&-\varepsilon(x +y, z)[\alpha(z),[f(x), f(y)]].\nonumber
\end{eqnarray}
The last equality follows form the $\varepsilon$-Hom-Jacobi identity. Thus, condition (\ref{n2}) holds for the multiplication $\ast$ if and only if
$$[f([f(x), y]+[x, f(y)])-[f(x), f(y)], \alpha(z)]=0.$$
Which is equivalent to (\ref{f}). The condition (\ref{g}) is simply a restatement of (\ref{n1}) for the multiplication $\ast$.
This ends the proof.
\end{proof}
\begin{corollary}
 Let $(L, [-, -], \varepsilon, R, \alpha)$ be a (multiplicative) Hom-Lie Rota-Baxter color algebra of weight zero such that
$$[R([R(x), y]), \alpha(z)]=\varepsilon(y, z)[R([R(x), z]), \alpha(y)].$$
Then $(L, \ast, \varepsilon, \alpha)$ is a Hom-Novikov color algebra.
\end{corollary}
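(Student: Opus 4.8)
The plan is to obtain this corollary as an immediate specialization of the preceding Proposition to the case $f=R$, where $R$ is the weight-zero Rota-Baxter operator; consequently the relevant product is $x\ast y=[R(x),y]$. First I would check that the structural hypotheses of the Proposition are already built into the definition of a Rota-Baxter Hom-Lie color algebra: the operator $R$ is an even linear map and satisfies $R\circ\alpha=\alpha\circ R$, so the triple $(L,\ast,\varepsilon,\alpha)$ falls squarely under the Proposition's setup. It then remains only to verify conditions (\ref{f}) and (\ref{g}) with $f$ replaced by $R$.

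For condition (\ref{g}), substituting $f=R$ produces exactly
$$[R([R(x),y]),\alpha(z)]=\varepsilon(y,z)[R([R(x),z]),\alpha(y)],$$
which is precisely the additional hypothesis imposed in the statement of the corollary; hence (\ref{g}) holds by assumption and requires no further work.

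For condition (\ref{f}), the key observation is that the weight-zero Rota-Baxter relation (\ref{rbhl}) reads $[R(x),R(y)]=R\big([R(x),y]+[x,R(y)]\big)$. Substituting $f=R$ into the element appearing in (\ref{f}) gives
$$R\big([R(x),y]+[x,R(y)]\big)-[R(x),R(y)],$$
and by the Rota-Baxter relation this expression vanishes identically. Since $0\in Z(\alpha(L))$ trivially, condition (\ref{f}) is satisfied.

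Having confirmed both conditions, the Proposition delivers the conclusion that $(L,\ast,\varepsilon,\alpha)$ is a Hom-Novikov color algebra. I do not expect any genuine obstacle, since the argument is a direct invocation of the Proposition; the only point demanding a moment of care is to line up the precise bracket combination $[R(x),y]+[x,R(y)]$ coming from the weight-zero identity with the term $f([f(x),y]+[x,f(y)])-[f(x),f(y)]$ in (\ref{f}), after which the cancellation to zero is immediate.
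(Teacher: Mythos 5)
Your proposal is correct and follows essentially the route the paper intends: the extra hypothesis supplies axiom (\ref{n1}) for $x\ast y=[R(x),y]$, and the weight-zero Rota--Baxter identity supplies axiom (\ref{n2}). The paper's one-line proof cites Lemma \ref{hde} (which yields the Hom-left-symmetric identity, i.e.\ (\ref{n2})) rather than the immediately preceding Proposition, but your verification of conditions (\ref{f}) and (\ref{g}) with $f=R$ amounts to the same computation and is, if anything, the more explicit and complete write-up.
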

\begin{proof}
 It comes from Lemma \ref{hde}.
\end{proof}
The following proposition is proved by direct computation.
\begin{proposition}
 Given two Hom-Novikov algebra $(A_1, \cdot_1, \alpha_1)$ and $(A_2, \cdot_2, \alpha_2)$, there is a Hom-Novikov algebra 
$(A_1\oplus A_2, \cdot, \alpha_1\oplus \alpha_2)$, where 
\begin{eqnarray}
 (\alpha_1\oplus\alpha_2)(x_1+x_2)&:=&\alpha_1(x_1)+\alpha_2(x_2)\\
(x_1, x_2)\cdot(y_1, y_2)&:=&(x_1\cdot_1 y_1, x_2\cdot_2 y_2)\\
\end{eqnarray}
\end{proposition}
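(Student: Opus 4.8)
The plan is to verify the two defining Hom-Novikov identities (\ref{n1}) and (\ref{n2}) directly on $A_1\oplus A_2$, specialized to the ordinary (trivially graded) setting in which $\varepsilon\equiv 1$, exploiting that both the product $\cdot$ and the twisting map $\alpha:=\alpha_1\oplus\alpha_2$ are defined coordinatewise. The key structural remark is that, under the identification $(x_1,x_2)\leftrightarrow x_1+x_2$, every expression built from $\cdot$ and $\alpha$ splits as a pair whose first entry lies entirely in $A_1$ and whose second entry lies entirely in $A_2$; hence each identity required on the direct sum is equivalent to the conjunction of the same identity in $A_1$ and in $A_2$, both of which hold by assumption.

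First I would take arbitrary $x=(x_1,x_2)$, $y=(y_1,y_2)$, $z=(z_1,z_2)$ in $A_1\oplus A_2$ and check (\ref{n1}). Its left-hand side is $(x\cdot y)\cdot\alpha(z)=\big((x_1\cdot_1 y_1)\cdot_1\alpha_1(z_1),\,(x_2\cdot_2 y_2)\cdot_2\alpha_2(z_2)\big)$, and its right-hand side is $(x\cdot z)\cdot\alpha(y)=\big((x_1\cdot_1 z_1)\cdot_1\alpha_1(y_1),\,(x_2\cdot_2 z_2)\cdot_2\alpha_2(y_2)\big)$. The first components agree by (\ref{n1}) in $A_1$ and the second components agree by (\ref{n1}) in $A_2$, so the equality holds on $A_1\oplus A_2$.

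Next I would treat (\ref{n2}) in the same fashion. The $i$-th component of $(x\cdot y)\cdot\alpha(z)-\alpha(x)\cdot(y\cdot z)$ is exactly $(x_i\cdot_i y_i)\cdot_i\alpha_i(z_i)-\alpha_i(x_i)\cdot_i(y_i\cdot_i z_i)$, and likewise for the expression obtained after interchanging $x$ and $y$; applying (\ref{n2}) in each $A_i$ shows that the two sides of (\ref{n2}) coincide componentwise, hence on the direct sum. Finally, if $A_1$ and $A_2$ are both multiplicative then $\alpha_1\oplus\alpha_2$ is a morphism for $\cdot$, once more by the coordinatewise definition, so the direct sum inherits multiplicativity.

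The computation is entirely mechanical, so there is no genuine obstacle beyond the bookkeeping of the coordinatewise structure. The only point meriting a word of care is the implicit identification between the additive notation $x_1+x_2$ used in defining $\alpha_1\oplus\alpha_2$ and the pair notation $(x_1,x_2)$ used in defining $\cdot$; once these are recognized as the same element of $A_1\oplus A_2$, both axioms reduce transparently to their validity in each summand.
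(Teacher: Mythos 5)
Your proof is correct and matches the paper's approach exactly: the paper merely states that this proposition ``is proved by direct computation,'' and your componentwise verification of identities (\ref{n1}) and (\ref{n2}) on each summand is precisely that computation, carried out in full. Nothing is missing, and your remark about identifying $x_1+x_2$ with $(x_1,x_2)$ is a fair clarification of the paper's mixed notation.
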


Now we have the following theorem.
\begin{theorem}
 Let $(A, \cdot, \varepsilon, \alpha_A)$ be a commutative 
Hom-associative color algebra and $(S, \ast, \varepsilon, \alpha_S)$ be a Hom-Novikov color algebra.
Then  $(S\otimes A, \star, \alpha_{S\otimes A})$ is a Hom-Novikov color algebra, with
\begin{eqnarray}
\alpha_{S\otimes A}&=&\alpha_S\otimes\alpha_A,\nonumber\\
 (x\otimes a)\star(y\otimes b)&=&\varepsilon(a, y)(x\ast y)\otimes(a\cdot b),\nonumber
\end{eqnarray}
for all $x, y\in \mathcal{H}(S), a, b\in \mathcal{H}(A)$.
\end{theorem}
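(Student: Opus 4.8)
The plan is to verify the two Hom-Novikov color axioms (\ref{n1}) and (\ref{n2}) for the product $\star$ on homogeneous generators $u=x\otimes a$, $v=y\otimes b$, $w=z\otimes c$ with $x,y,z\in\mathcal{H}(S)$ and $a,b,c\in\mathcal{H}(A)$, after first disposing of the routine facts: $S\otimes A$ is $G$-graded by $(S\otimes A)_\gamma=\sum_{\gamma=p+q}S_p\otimes A_q$, the map $\star$ is even bilinear, and $\alpha_{S\otimes A}=\alpha_S\otimes\alpha_A$ is even because $\alpha_S$ and $\alpha_A$ are. Since $\alpha_S,\alpha_A,\ast,\cdot$ are all even, the $S$-part of $x\otimes a$ carries degree $|x|$ and its $A$-part degree $|a|$, so each application of $\star$ contributes one bicharacter factor obtained by pairing the $A$-degree of the left factor against the $S$-degree of the right factor. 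For example,
$$(u\star v)\star\alpha_{S\otimes A}(w)=\varepsilon(a,y)\varepsilon(a,z)\varepsilon(b,z)\,\big((x\ast y)\ast\alpha_S(z)\big)\otimes\big((a\cdot b)\cdot\alpha_A(c)\big),$$
and the remaining products appearing in the axioms expand in the same mechanical way.

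For (\ref{n1}) I would expand the right-hand side $\varepsilon(v,w)\,(u\star w)\star\alpha_{S\otimes A}(v)$ by the same rule, then rewrite the $S$-factor using (\ref{n1}) for $S$ and the $A$-factor using the commutative Hom-associative identity (\ref{hcc}) for $A$ in the form $(a\cdot b)\cdot\alpha_A(c)=\varepsilon(b,c)(a\cdot c)\cdot\alpha_A(b)$. After these substitutions both sides carry the same tensor $\big((x\ast z)\ast\alpha_S(y)\big)\otimes\big((a\cdot c)\cdot\alpha_A(b)\big)$, and their scalar prefactors differ exactly by $\varepsilon(y,c)\varepsilon(c,y)$, which is $1$ by bicharacter axiom (i). Hence (\ref{n1}) holds for $\star$.

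For (\ref{n2}) I would compute all four products in the axiom. The decisive normalisation is to use (\ref{hcc}) together with the $\varepsilon$-commutativity of $A$ to bring every $A$-part to the single common form $(a\cdot b)\cdot\alpha_A(c)$: one has $\alpha_A(a)\cdot(b\cdot c)=(a\cdot b)\cdot\alpha_A(c)$ directly, while $(b\cdot a)\cdot\alpha_A(c)=\varepsilon(b,a)(a\cdot b)\cdot\alpha_A(c)$ and $\alpha_A(b)\cdot(a\cdot c)=\varepsilon(b,a)(a\cdot b)\cdot\alpha_A(c)$. Once all four terms share this $A$-factor, the common prefactor $\varepsilon(a,y)\varepsilon(a,z)\varepsilon(b,z)$ factors out of the whole identity; on the right-hand side the signs coming from $\varepsilon(u,v)$ and from the $x\leftrightarrow y$, $a\leftrightarrow b$ swaps collapse via $\varepsilon(x,b)\varepsilon(b,x)=\varepsilon(a,b)\varepsilon(b,a)=1$, leaving precisely the scalar $\varepsilon(x,y)$. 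Cancelling the common $A$-tensor and the common prefactor reduces the whole statement to
$$(x\ast y)\ast\alpha_S(z)-\alpha_S(x)\ast(y\ast z)=\varepsilon(x,y)\Big((y\ast x)\ast\alpha_S(z)-\alpha_S(y)\ast(x\ast z)\Big),$$
which is exactly (\ref{n2}) for $S$. Thus (\ref{n2}) holds for $\star$, completing the verification.

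The main obstacle is purely the sign bookkeeping: one must track scrupulously the bicharacter factor produced at each application of $\star$, as well as those introduced when rearranging the $A$-parts through (\ref{hcc}) and $\varepsilon$-commutativity, and then confirm that in each axiom the surplus signs telescope to $1$ by $\varepsilon(p,q)\varepsilon(q,p)=1$. No structural difficulty arises beyond this, since after the $A$-side is normalised and the signs are cleared, both identities collapse onto the Hom-Novikov identities already assumed for $S$.
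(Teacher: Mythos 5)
Your proposal is correct and follows essentially the same route as the paper: expand each iterated $\star$-product into a bicharacter prefactor times an $S$-tensor-$A$ term, use (\ref{n1}) for $S$ together with (\ref{hcc}) for $A$ to verify the first axiom, and normalise all $A$-parts to $(a\cdot b)\cdot\alpha_A(c)$ via (\ref{hcc}) and $\varepsilon$-commutativity so that the second axiom reduces to (\ref{n2}) for $S$. The only difference is organisational (you factor out the common prefactor from all four terms at once, while the paper introduces the intermediate quantity $D(x,a,y,b,z,c)$), and your sign bookkeeping checks out.
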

\begin{proof}
To simplify the typography, we omit the subscripts $A$ and $S$ and write $\alpha(x)$ and $\alpha(a)$ instead of $\alpha_S(x)$ and $\alpha_A(a)$
respectively. First, for all $x, y, z\in \mathcal{H}(S), a, b, c\in \mathcal{H}(A)$, we have
\begin{eqnarray}
[(x\otimes a)\star(y\otimes b)]\star(\alpha(z)\otimes \alpha(c))
&=&\varepsilon(a, y)[(x\ast y)\otimes(a\cdot b)]\star(\alpha(z)\otimes\alpha(c))\nonumber\\
&=&\varepsilon(a+b, z)\varepsilon(a, y)(x\ast y)\ast\alpha(z)\otimes(a\cdot b)\cdot\alpha(c)\nonumber\\
&=&\varepsilon(a, z)\varepsilon(b, z)\varepsilon(a, y)(x\ast y)\ast\alpha(z)\otimes(a\cdot b)\cdot\alpha(c),\nonumber
\end{eqnarray}
and 
\begin{eqnarray}
 (\alpha(x)\otimes\alpha(a))\star[(y\otimes b)\star(z\otimes c)]
&=&\varepsilon(b, z)(\alpha(x)\otimes\alpha(a))\star[(y\ast z)\otimes(b\cdot c)]\nonumber\\
&=&\varepsilon(b, z)\varepsilon(a, y+z)\alpha(x)\ast(y\ast z)\otimes\alpha(a)\cdot(b\cdot c)\nonumber\\
&=&\varepsilon(b, z)\varepsilon(a, y)\varepsilon(a, z)\alpha(x)\ast(y\ast z)\otimes\alpha(a)\cdot(b\cdot c)\nonumber.
\end{eqnarray}
 Next, let us prove axiom (\ref{n1}) for $\star$. 
\begin{eqnarray}
&& \quad[(x\otimes a)\star(y\otimes b)]\star(\alpha(z)\otimes \alpha(c))
-\varepsilon(y+b, z+c)[(x\otimes a)\star(z\otimes c)]\star(\alpha(y)\otimes \alpha(b))=\nonumber\\
&&=\varepsilon(a, z)\varepsilon(b, z)\varepsilon(a, y)(x\ast y)\ast\alpha(z)\otimes(a\cdot b)\cdot\alpha(c)\nonumber\\
&&\quad\quad-\varepsilon(y+b, z+c)\varepsilon(a, y)\varepsilon(c, y)\varepsilon(a, z)(x\ast z)\ast\alpha(y)\otimes(a\cdot c)\cdot\alpha(b)\nonumber\\
&&=\varepsilon(a, z)\varepsilon(b, z)\varepsilon(a, y)\varepsilon(y, z)\varepsilon(b, c)(x\ast z)\ast\alpha(y)\otimes(a\cdot c)\cdot\alpha(b)
\;\;(\mbox{by}\;\; (\ref{n1})\;\;\mbox{and}\;\;(\ref{hcc}))\nonumber\\
&&\quad\quad-\varepsilon(y, z)\varepsilon(y, c)\varepsilon(b, z)\varepsilon(b, c)
 \varepsilon(a, y)\varepsilon(c, y)\varepsilon(a, z)(x\ast z)\ast\alpha(y)\otimes(a\cdot c)\cdot\alpha(b)\nonumber\\
&&=0.\nonumber
\end{eqnarray}
Now let us prove axiom (\ref{n2}) for $\star$. Let us set 
\begin{eqnarray}
D(x, a, y, b, z, c)
&=&[(x\otimes a)\star(y\otimes b)]\star(\alpha(z)\otimes \alpha(c))
-(\alpha(x)\otimes\alpha(a))\star[(y\otimes b)\star(z\otimes c)]\nonumber\\
&=&\varepsilon(a, z)\varepsilon(b, z)\varepsilon(a, y)(x\ast y)\ast\alpha(z)\otimes(a\cdot b)\cdot\alpha(c)\nonumber\\
&&-\varepsilon(b, z)\varepsilon(a, y)\varepsilon(a, z)\alpha(x)\ast(y\ast z)\otimes\alpha(a)\cdot(b\cdot c)\nonumber\\
&=&\varepsilon(a, z)\varepsilon(b, z)\varepsilon(a, y)[(x\ast y)\ast\alpha(z)- \alpha(x)\ast(y\ast z)]\otimes(a\cdot b)\cdot\alpha(c)
 \;\;(\mbox{by}\;\; (\ref{hass}))\nonumber.
\end{eqnarray}
Then
\begin{eqnarray}
 &&\quad D(x, a, y, b, z, c)-\varepsilon(x+a, y+b)D(y, b, x, a, z, c)=\nonumber\\
&&=\varepsilon(a, z)\varepsilon(b, z)\varepsilon(a, y)[(x\ast y)\ast\alpha(z)- \alpha(x)\ast(y\ast z)]\otimes(a\cdot b)\cdot\alpha(c)\nonumber\\
&&\quad-\varepsilon(x+a, y+b)\varepsilon(b, z)\varepsilon(a, z)\varepsilon(b, x)[(y\ast x)\ast\alpha(z)
- \alpha(y)\ast(x\ast z)]\otimes(b\cdot a)\cdot\alpha(c)\nonumber.
\end{eqnarray}
By $\varepsilon$-commutativity, 
\begin{eqnarray}
 &&\quad D(x, a, y, b, z, c)-\varepsilon(x+a, y+b)D(y, b, x, a, z, c)=\nonumber\\
&&=\varepsilon(a, b)\varepsilon(a, z)\varepsilon(b, z)\varepsilon(a, y)[(x\ast y)\ast\alpha(z)
- \alpha(x)\ast(y\ast z)]\otimes(b\cdot a)\cdot\alpha(c)\nonumber\\
&&\quad-\varepsilon(x+a, y+b)\varepsilon(b, z)\varepsilon(a, z)\varepsilon(b, x)[(y\ast x)\ast\alpha(z)
- \alpha(y)\ast(x\ast z)]\otimes(b\cdot a)\cdot\alpha(c)\nonumber.
\end{eqnarray}
The right hand side vanishes by (\ref{n2}). 
\end{proof}

\section{Hom-quadratic Hom-Novikov color algebras}
Likewise Hom-quadratic Hom-associative color algebras,  Hom-quadratic Hom-Lie color algebras \cite{FISA} and quadratic Hom-Novikov algebra \cite{YY}, 
we introduce here the Hom-quadratic Hom-Novikov color algebras and study their various twisting.
\begin{definition}
 A Hom-quadratic Hom-Novikov color algebra $(A, \cdot, \varepsilon, B, \alpha, \beta)$ is a Hom-Novikov color algebra
$(A, \cdot, \varepsilon,\alpha)$ with a pair $(B, \beta)$ where
\begin{enumerate}
 \item [a)]
$\beta : A \rightarrow A$ is an even linear map,
\item  [b)]
$\varepsilon$-symmetric i.e. $B(x, y)=\varepsilon(x, y)B(y, x)$, 
\item [c)]
  $B$ is a nondegenerate,
\item  [d)]
invariant bilinear form on $A$ i.e. $B(x\cdot y, \beta(z))=B(\beta(x), y\cdot z)$, and 
\item  [e)]
$\alpha$ is $B$-symmetric i.e. $B(\alpha(x), y)=B(x, \alpha(y))$
\end{enumerate}
When $\beta=id_A$, $B$ is said to be invariant and $(A, \cdot, \varepsilon, \alpha, B, id)$ is called quadratic Hom-Novikov color algebra and simply
denoted $(A, \cdot, \varepsilon, \alpha, B)$.
\end{definition}
Let $Aut_s(A, B)$ be the set of all  automorphisms $\varphi$ of $A$ which satisfy $B(\varphi(x), y)=B(x, \varphi(y)$, for all $x, y\in A$.
 We call $Aut_s(A, B)$ the  set of symmetric automorphisms of $A$.

The below result allows to obtain a quadratic Hom-Novikov color algebra from a given one and a symmetric automorphism. That is
the subcategory of quadratic Hom-Novikov color algebras is closed under symmetric automorphisms.
\begin{theorem}
 Let $(A, \cdot, \varepsilon, \alpha, B)$ be a quadratic Hom-Novikov color algebra and $\beta$ be a symmetric automorphism.
Then 
$$A_\beta=(A, \cdot_\beta=\beta\circ\cdot, \varepsilon, \beta\circ\alpha, B_\beta(-, -)=B(\beta(-), -))$$
 is a quadratic Hom-Novikov color algebra.
\end{theorem}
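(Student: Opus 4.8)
The plan is to verify, one condition at a time, the five defining properties of a quadratic Hom-Novikov color algebra for the twisted datum $A_\beta=(A,\cdot_\beta,\varepsilon,\beta\circ\alpha,B_\beta)$, where $\cdot_\beta=\beta\circ\cdot$ and $B_\beta(-,-)=B(\beta(-),-)$. First I would dispose of the underlying algebra structure: a symmetric automorphism $\beta$ is in particular an (invertible) morphism, hence a weak morphism, so Theorem~\ref{wm} applies directly and shows that $(A,\cdot_\beta,\varepsilon,\beta\circ\alpha)$ is a Hom-Novikov color algebra. That reduces the task to the four conditions involving the bilinear form, for which $B_\beta$ is clearly even and bilinear since $B$ is and $\beta$ is even.

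The $\varepsilon$-symmetry and the nondegeneracy of $B_\beta$ are quick. For $\varepsilon$-symmetry I would chain the $B$-symmetry of $\beta$ and then the $\varepsilon$-symmetry of $B$: $B_\beta(x,y)=B(\beta(x),y)=B(x,\beta(y))=\varepsilon(x,\beta(y))B(\beta(y),x)=\varepsilon(x,y)B_\beta(y,x)$, the last step using that $\beta$ is even, so $\varepsilon(x,\beta(y))=\varepsilon(x,y)$. Nondegeneracy follows from bijectivity of $\beta$: if $B_\beta(x,y)=B(\beta(x),y)=0$ for all $y$, then $\beta(x)=0$ by nondegeneracy of $B$, and hence $x=0$ since $\beta$ is injective.

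The heart of the argument is the invariance $B_\beta(x\cdot_\beta y,z)=B_\beta(x,y\cdot_\beta z)$. I would start from the right-hand side and migrate the twist through the product, alternately invoking that $\beta$ is a morphism ($\beta(u\cdot v)=\beta(u)\cdot\beta(v)$), the invariance of $B$ ($B(u\cdot v,w)=B(u,v\cdot w)$), and the $B$-symmetry of $\beta$ ($B(\beta(u),v)=B(u,\beta(v))$): namely $B_\beta(x,y\cdot_\beta z)=B(\beta(x),\beta(y\cdot z))=B(\beta(x),\beta(y)\cdot\beta(z))=B(\beta(x)\cdot\beta(y),\beta(z))=B(\beta(x\cdot y),\beta(z))=B(\beta^2(x\cdot y),z)=B_\beta(x\cdot_\beta y,z)$. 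Finally, for the compatibility of the new twist with the form I would check that $\beta\circ\alpha$ is $B_\beta$-symmetric; using that $\alpha$ and $\beta$ commute, that $\alpha$ is $B$-symmetric, and that $\beta$ is $B$-symmetric, both $B_\beta((\beta\circ\alpha)(x),y)$ and $B_\beta(x,(\beta\circ\alpha)(y))$ collapse to $B(x,\beta^2\alpha(y))$, giving equality.

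The only step requiring genuine care --- and the one I expect to be the main obstacle --- is the invariance computation, where one must track exactly how many copies of $\beta$ land on each argument and apply the three properties in the correct order; a single misstep in moving $\beta$ across the product or across the form breaks the chain, whereas every other condition is essentially immediate.
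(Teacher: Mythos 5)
Your proposal is correct and follows essentially the same route as the paper: reduce the algebra structure to Theorem~\ref{wm}, note that $\varepsilon$-symmetry and nondegeneracy of $B_\beta$ are immediate, and then verify invariance and the $B_\beta$-symmetry of $\beta\circ\alpha$ by shuttling $\beta$ across the product and the form using the morphism property, the invariance of $B$, and the $B$-symmetry of $\alpha$ and $\beta$. The only cosmetic difference is that you run the invariance chain from $B_\beta(x,y\cdot_\beta z)$ to $B_\beta(x\cdot_\beta y,z)$ while the paper goes in the opposite direction, and you spell out the two easy conditions that the paper dismisses as clear.
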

\begin{proof}
 It is clear that $B_\beta$ is a bilinear form, $\varepsilon$-symmetric, nondegenerate (since $\beta$ is an automorphism). Now prove that 
$B_\beta$ is invariant under the multiplication $\cdot_\beta$. For any $x, y, z \in\mathcal{H}(A)$,
\begin{eqnarray}
 B_\beta(x\cdot_\beta y, z)&=&B_\beta(\beta(x)\cdot\beta(y), z)=B(\beta(\beta(x)\cdot\beta(y)), z)\nonumber\\
&=&B(\beta(x)\cdot\beta(y), \beta(z))
=B(\beta(x), \beta(y)\cdot \beta(z))=B(\beta(x), y\cdot_\beta z)\nonumber\\
&=&B_\beta(x, y\cdot_\beta z).\nonumber
\end{eqnarray}
Finally, $\beta\circ\alpha$ is B-symmetric. In fact,
\begin{eqnarray}
 B_\beta((\beta\circ\alpha)(x), y)&=&B((\beta^2\circ\alpha)(x), y)=B((\beta\circ\alpha\circ\beta)(x), y)\nonumber\\
&=&B(\beta(x), (\alpha\circ\beta)(y))=B(\beta(x), (\beta\circ\alpha)(y))=B_\beta(x, (\beta\circ\alpha)(y))\nonumber.
\end{eqnarray}
This completes the proof.
\end{proof}
\begin{corollary}
 Let $(A, \cdot, \varepsilon, \alpha, B)$ be a quadratic regular Hom-Novikov color algebra. Then
$(A, \alpha^n\circ\cdot, \varepsilon, \alpha^{n+1}, B_{\alpha^n})$ is a regular Hom-Novikov color algebra and
$(A, \alpha^n\circ\cdot, \varepsilon, \alpha^{n+1}, B_{\alpha})$, where $B_\alpha(x, y)=B(\alpha^n(x), y)$, is a quadratic regular
Hom-Novikov color algebra.
\end{corollary}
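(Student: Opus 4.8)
The plan is to derive this corollary as a single application of the preceding theorem, taking the symmetric automorphism to be $\beta=\alpha^n$. Beyond quoting that theorem, the only content is to verify that $\alpha^n$ genuinely lies in $Aut_s(A,B)$ and then to read off the twisted data.

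First I would observe that regularity of $(A,\cdot,\varepsilon,\alpha,B)$ means $\alpha$ is an automorphism, hence $\alpha^n$ is an automorphism for every $n\geq 0$. Next I would check that $\alpha^n$ is $B$-symmetric by induction on $n$. The base case $n=1$ is precisely axiom (e), namely $B(\alpha(x),y)=B(x,\alpha(y))$. For the inductive step, for $x,y\in\mathcal{H}(A)$ I would write $B(\alpha^n(x),y)=B(\alpha^{n-1}(\alpha(x)),y)=B(\alpha(x),\alpha^{n-1}(y))=B(x,\alpha^n(y))$, applying the induction hypothesis to $\alpha^{n-1}$ and then axiom (e). This shows $\alpha^n\in Aut_s(A,B)$.

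With $\alpha^n$ confirmed as a symmetric automorphism, I would invoke the theorem with $\beta=\alpha^n$. It yields that $A_{\alpha^n}=(A,\alpha^n\circ\cdot,\varepsilon,\alpha^n\circ\alpha,B_{\alpha^n})$ is a quadratic Hom-Novikov color algebra, where $\alpha^n\circ\alpha=\alpha^{n+1}$ is the new twisting map and $B_{\alpha^n}(x,y)=B(\alpha^n(x),y)$ is the new invariant form. This is exactly the stated quadruple. To finish, since $\alpha$ is an automorphism so is $\alpha^{n+1}$, which gives regularity; thus $(A,\alpha^n\circ\cdot,\varepsilon,\alpha^{n+1},B_{\alpha^n})$ is a quadratic regular Hom-Novikov color algebra.

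I do not anticipate a genuine obstacle, since the statement is a corollary of the twisting theorem. The only point requiring care is the induction showing $\alpha^n$ is $B$-symmetric, and this is immediate from repeated use of axiom (e). Note finally that the two displayed quadruples in the statement coincide under the convention $B_{\alpha^n}=B_\alpha=B(\alpha^n(-),-)$, so a single application of the theorem establishes both assertions at once.
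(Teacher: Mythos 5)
Your proposal is correct and follows essentially the same route as the paper, whose entire proof is the one-line remark that the result follows from the twisting theorem with $\beta=\alpha^n$. You additionally supply the (omitted but worthwhile) verification that $\alpha^n$ is a $B$-symmetric automorphism, and you correctly observe that the two displayed quadruples coincide since $B_{\alpha^n}(x,y)=B(\alpha^n(x),y)$ is exactly the form denoted $B_\alpha$ in the statement.
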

\begin{proof}
 It follows from Theorem \ref{wm} and taking $\beta=\alpha^n$.
\end{proof}

From Hom-quadratic Hom-Novikov color algebra $A$ to the Hom-quadratic Hom-Lie color algebra associated to $A$. 
\begin{proposition}
 Let $(A, \cdot, \varepsilon, \alpha, B, \alpha)$ be a Hom-quadratic regular Hom-Novikov color algebra. 
Then $(A, [-, -], \varepsilon, \alpha, B_\alpha(-, -)=B(\alpha(-), -), \alpha)$
 is a Hom-quadratic  Hom-Lie color algebra, where $[x, y]=x\cdot y-\varepsilon(x, y)y\cdot x$ for all $x, y\in\mathcal{H}(A)$.
\end{proposition}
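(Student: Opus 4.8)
The first thing to note is that the Hom-Lie color algebra structure itself is not at stake: by Theorem~\ref{nlt}, $(A,[-,-],\varepsilon,\alpha)$ is already a Hom-Lie color algebra for the $\varepsilon$-commutator $[x,y]=x\cdot y-\varepsilon(x,y)\,y\cdot x$, so both the $\varepsilon$-skew-symmetry and the $\varepsilon$-Hom-Jacobi identity come for free (this is precisely where the Novikov axioms (\ref{n1}) and (\ref{n2}) have already been used). Hence the whole task reduces to checking that the pair $(B_\alpha,\alpha)$, with $B_\alpha(x,y)=B(\alpha(x),y)$, satisfies the bilinear-form conditions (b)--(e) of the definition of a Hom-quadratic structure, now relative to the bracket rather than the product.

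I would dispose of the three routine conditions first, each following from the hypotheses on $(A,\cdot,\varepsilon,\alpha,B)$ by a short chain. For the $\varepsilon$-symmetry I would write $B_\alpha(x,y)=B(\alpha(x),y)=B(x,\alpha(y))=\varepsilon(x,y)B(\alpha(y),x)=\varepsilon(x,y)B_\alpha(y,x)$, using in turn that $\alpha$ is $B$-symmetric, that $B$ is $\varepsilon$-symmetric, and that $\alpha$ is even so $\deg\alpha(y)=\deg y$. Nondegeneracy of $B_\alpha$ is immediate since $\alpha$ is an automorphism (regularity) and $B$ is nondegenerate: $B(\alpha(x),y)=0$ for all $y$ forces $\alpha(x)=0$, hence $x=0$. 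Finally $\alpha$ is $B_\alpha$-symmetric by the chain $B_\alpha(\alpha(x),y)=B(\alpha^2(x),y)=B(\alpha(x),\alpha(y))=B_\alpha(x,\alpha(y))$, again from $B$-symmetry of $\alpha$.

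The heart of the proof is condition (d), the invariance $B_\alpha([x,y],\alpha(z))=B_\alpha(\alpha(x),[y,z])$. My plan is to expand the bracket on both sides and then drive everything to pairings with a common first argument $\alpha(x)$. On the right, $B_\alpha(\alpha(x),[y,z])=B(\alpha^2(x),[y,z])$, and applying $B$-symmetry of $\alpha$ together with the multiplicativity of $\alpha$ (available by regularity) I would rewrite $B(\alpha^2(x),y\cdot z)=B(\alpha(x),\alpha(y)\cdot\alpha(z))$, and similarly for $z\cdot y$, so that the right-hand side becomes $B(\alpha(x),[\alpha(y),\alpha(z)])$. On the left I would push the outer $\alpha$ through the bracket and across $B$ and use the invariance of $B$ to split $B_\alpha([x,y],\alpha(z))$ into two pairings of the same shape. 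The key tool, which I would isolate as a preliminary lemma, is the graded-cyclic identity
\[
B(\alpha(u),v\cdot w)=\varepsilon(u,v+w)\,B(\alpha(v),w\cdot u),
\]
obtained by alternately applying the invariance of $B$ and its $\varepsilon$-symmetry. Feeding this identity into the remaining cross term and collapsing the bicharacter factors via $\varepsilon(x,y)\varepsilon(y,x)=1$ and $\varepsilon(a,b+c)=\varepsilon(a,b)\varepsilon(a,c)$ is designed to make the two sides coincide term by term.

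I expect the genuine obstacle to be exactly this invariance step, and specifically the bookkeeping of the powers of $\alpha$. The routine conditions never disturb the placement of $\alpha$, but in the invariance one must repeatedly trade an $\alpha$ between the two slots of $B$ (via $B$-symmetry) and between the arguments of a product (via multiplicativity), while simultaneously rotating factors with the cyclic identity; the delicate point is to land both sides on the \emph{same} configuration $B(\alpha(x),\alpha(y)\cdot\alpha(z))$ rather than on configurations differing by a stray $\alpha$. I would therefore record every bicharacter manipulation explicitly and keep the cyclic identity as the only nontrivial input, since it is the interplay of that identity with the $\varepsilon$-relations—not the Novikov identities, which are already absorbed into Theorem~\ref{nlt}—that forces the equality.
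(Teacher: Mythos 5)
Your overall route is the same as the paper's: the bracket axioms are delegated to Theorem~\ref{nlt}, the $\varepsilon$-symmetry, nondegeneracy and $B_\alpha$-symmetry of $\alpha$ are dispatched exactly as you do, and the only substantive point is the invariance of $B_\alpha$ under the bracket. Your three routine verifications are correct.

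The problem is that the one step you yourself identify as the heart of the proof is left as a plan, and the plan does not close under the stated hypotheses. The hypothesis ``$(A,\cdot,\varepsilon,\alpha,B,\alpha)$ is Hom-quadratic'' gives you the invariance $B(x\cdot y,\alpha(z))=B(\alpha(x),y\cdot z)$ (the pair is $(B,\beta)$ with $\beta=\alpha$), and your cyclic identity $B(\alpha(u),v\cdot w)=\varepsilon(u,v+w)B(\alpha(v),w\cdot u)$ is the correct consequence of that version of invariance. But if you now run your own scheme you land exactly on the stray $\alpha$ you were worried about: expanding and applying this invariance to each term gives
\begin{equation*}
B_\alpha([x,y],\alpha(z))=B\bigl(\alpha^2(x),\alpha(y)\cdot z\bigr)-\varepsilon(y,z)\,B\bigl(\alpha^2(x),z\cdot\alpha(y)\bigr),
\end{equation*}
whereas the target is
\begin{equation*}
B_\alpha(\alpha(x),[y,z])=B\bigl(\alpha^2(x),y\cdot z\bigr)-\varepsilon(y,z)\,B\bigl(\alpha^2(x),z\cdot y\bigr),
\end{equation*}
and no combination of $B$-symmetry of $\alpha$ (which only trades $\alpha$'s between the two slots of $B$) and multiplicativity removes the extra $\alpha$ sitting on $y$ inside the product. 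So the identity you need is not a consequence of the $\beta=\alpha$ invariance, and ``recording every bicharacter manipulation explicitly'' will not rescue it. What the paper actually does in its computation is use the unadorned invariance $B(u\cdot v,w)=B(u,v\cdot w)$ (i.e.\ the $\beta=\mathrm{id}$, ``quadratic'' case of the definition): then $B(\alpha(x)\cdot\alpha(y),\alpha(z))=B(\alpha(x),\alpha(y)\cdot\alpha(z))$, the cross term is handled by two applications of $\varepsilon$-symmetry and one of invariance (your cyclic identity in its $\beta=\mathrm{id}$ form), and multiplicativity plus $B$-symmetry of $\alpha$ convert everything to $B_\alpha(\alpha(x),\cdot)$. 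To make your argument work you must either assume that form of invariance outright (and note the discrepancy with the stated hypothesis) or supply an additional argument that the $\beta=\alpha$ invariance implies it, which it does not in general.
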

\begin{proof}
 Since $B$ is nondegenerate and $\alpha$ is an automorphism, $B_\alpha$ is nondegenerate. The $\varepsilon$-symmetric of $B_\alpha$ and 
the fact that $\alpha$ is $B$-symmetric are trivial. Thus
\begin{eqnarray}
 B_\alpha([x, y], \alpha(z))
&=&B([\alpha(x), \alpha(y)], \alpha(z))\nonumber\\
&=&B(\alpha(x)\cdot \alpha(y), \alpha(z))-\varepsilon(x, y)B(\alpha(y)\cdot \alpha(x), \alpha(z))\nonumber\\
&=&B(\alpha(x), \alpha(y)\cdot \alpha(z))-\varepsilon(y, z) B(\alpha(x), \alpha(z)\cdot \alpha(y))\nonumber\\
&=&B(\alpha^2(x), y\cdot z)-\varepsilon(y, z) B(\alpha^2(x), z\cdot y)\nonumber\\
&=&B_\alpha(\alpha(x), y\cdot z)-\varepsilon(y, z) B_\alpha(\alpha(x), z\cdot y)\nonumber\\
&=&B_\alpha(\alpha(x), [y, z])\nonumber
\end{eqnarray}
for all $x, y, z\in\mathcal{H}(A)$.
\end{proof}
\begin{corollary}
 Let $(A, \cdot, \varepsilon,  B)$ be a quadratic Novikov color algebra and $\alpha : A\rightarrow A$ be an automorphism of $A$ which is $B$-symmetric.
Then $(A, [-, -]_\alpha=\alpha\circ[-, -], \alpha, B_\alpha)$ is a quadratic Hom-Lie color algebra.
\end{corollary}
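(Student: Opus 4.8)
The plan is to realise the corollary as the $\alpha=id$ instance of the results proved above, producing the Hom-Lie bracket by twisting and then transporting the quadratic datum. First I would regard the quadratic Novikov color algebra $(A,\cdot,\varepsilon,B)$ as a quadratic Hom-Novikov color algebra whose twisting map is $id_A$; this is legitimate, since a Novikov color algebra is exactly a Hom-Novikov color algebra with $\alpha=id_A$ and the quadraticity axioms then reduce to the $\varepsilon$-symmetry and nondegeneracy of $B$ together with the invariance $B(x\cdot y,z)=B(x,y\cdot z)$. Because $\alpha$ is an automorphism which is $B$-symmetric, it belongs to $Aut_s(A,B)$, so the twisting theorem for quadratic Hom-Novikov color algebras established above applies with $\beta=\alpha$ and yields that $A_\alpha=(A,\alpha\circ\cdot,\varepsilon,\alpha,B_\alpha)$, with $B_\alpha(-,-)=B(\alpha(-),-)$, is a quadratic regular Hom-Novikov color algebra.

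Next I would produce the bracket. The $\varepsilon$-commutator of the twisted product $\star=\alpha\circ\cdot$ is
$$x\star y-\varepsilon(x,y)\,y\star x=\alpha(x\cdot y)-\varepsilon(x,y)\,\alpha(y\cdot x)=\alpha\big(x\cdot y-\varepsilon(x,y)\,y\cdot x\big)=\alpha([x,y])=[x,y]_\alpha,$$
so that Theorem \ref{nlt}, applied to the Hom-Novikov color algebra $A_\alpha$, already gives that $(A,[-,-]_\alpha,\varepsilon,\alpha)$ is a Hom-Lie color algebra; equivalently, this is the twist by the automorphism $\alpha$ of the Lie color algebra attached to $(A,\cdot,\varepsilon)$ through Proposition \ref{icn}. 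This settles the $\varepsilon$-skew-symmetry and the $\varepsilon$-Hom-Jacobi identity, so it only remains to check that $(B_\alpha,\alpha)$ is an admissible quadratic datum for $[-,-]_\alpha$. Three of the four conditions are immediate: the $\varepsilon$-symmetry of $B_\alpha$ follows from that of $B$ together with the $B$-symmetry of $\alpha$ and the evenness of $\alpha$; nondegeneracy of $B_\alpha$ follows because $\alpha$ is an automorphism; and $B_\alpha(\alpha(x),y)=B_\alpha(x,\alpha(y))$ is a direct consequence of $B(\alpha(x),y)=B(x,\alpha(y))$.

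The substantive point, and what I expect to be the main obstacle, is the invariance identity $B_\alpha([x,y]_\alpha,\alpha(z))=B_\alpha(\alpha(x),[y,z]_\alpha)$, which I would obtain by the same scheme as in the Proposition immediately preceding this corollary: I write $[x,y]_\alpha=[\alpha(x),\alpha(y)]$, expand both brackets, move $\alpha$ across $B$ by its $B$-symmetry, apply the invariance of $B$ on $(A,\cdot,\varepsilon)$ to each of the two resulting products, and finally reconcile the two ``transposed'' cross terms. The delicate part is purely the bookkeeping of the bicharacter weights: one must use the $\varepsilon$-symmetry of $B$ and the relations $\varepsilon(x,y)\varepsilon(y,x)=1$ and $\varepsilon(y,x+z)=\varepsilon(y,x)\varepsilon(y,z)$ so that the coefficient $\varepsilon(x,y)$ produced by expanding $[\alpha(x),\alpha(y)]$ is converted into the coefficient $\varepsilon(y,z)$ demanded by the expansion of $B_\alpha(\alpha(x),[y,z]_\alpha)$. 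Once this cancellation is carried out correctly, the quadruple $(A,[-,-]_\alpha,\alpha,B_\alpha)$ is a quadratic Hom-Lie color algebra, which is the assertion; taking $\alpha$ to be an involution recovers the corresponding statement directly over $(A,\cdot,\varepsilon)$.
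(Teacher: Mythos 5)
The paper states this corollary without a proof, so your argument stands on its own. Your overall route is the natural one and its first two thirds are correct: viewing $(A,\cdot,\varepsilon,B)$ as a quadratic Hom-Novikov color algebra with twist $id_A$, applying the twisting theorem with the symmetric automorphism $\beta=\alpha$ to obtain the quadratic Hom-Novikov color algebra $(A,\star=\alpha\circ\cdot,\varepsilon,\alpha,B_\alpha)$, observing that the $\varepsilon$-commutator of $\star$ is exactly $[-,-]_\alpha=\alpha\circ[-,-]$ so that Theorem \ref{nlt} gives the Hom-Lie color structure, and checking $\varepsilon$-symmetry, nondegeneracy and the $B_\alpha$-symmetry of $\alpha$ --- all of this is fine.

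The problem is the invariance step. You announce as your target the identity $B_\alpha([x,y]_\alpha,\alpha(z))=B_\alpha(\alpha(x),[y,z]_\alpha)$, i.e.\ the Hom-quadratic invariance with $\beta=\alpha$. First, this is not what the corollary asserts: by the paper's convention ``quadratic'' means the pair $(B_\alpha,id)$, so the identity to prove is $B_\alpha([x,y]_\alpha,z)=B_\alpha(x,[y,z]_\alpha)$. Second, and more seriously, the identity you target does not follow from the hypotheses: writing $[x,y]_\alpha=\alpha([x,y])$ and using that $\alpha$ is an automorphism together with the invariance, $\varepsilon$-symmetry and nondegeneracy of $B$, your identity reduces to $[\alpha(y)-y,z]=0$ for all $y,z$, i.e.\ to $\mathrm{Im}(\alpha-id_A)$ being central for the commutator bracket, which fails in general; so the computation you sketch ``by the same scheme as the preceding Proposition'' cannot close. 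The repair is immediate from what you have already established: the twisting theorem gives $B_\alpha(u\star v,w)=B_\alpha(u,v\star w)$, so expanding $[x,y]_\alpha=x\star y-\varepsilon(x,y)\,y\star x$ and carrying out the bicharacter bookkeeping exactly as in the paper's theorem passing from a quadratic Hom-Novikov color algebra to its quadratic Hom-Lie color algebra (using $\varepsilon(x,y)\varepsilon(x+y,z)\varepsilon(y+z,x)=\varepsilon(y,z)$) yields $B_\alpha([x,y]_\alpha,z)=B_\alpha(x,[y,z]_\alpha)$, which is the invariance the corollary actually requires.
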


From quadratic Hom-Novikov color algebra to the associated quadratic Hom-Lie color algebra. 
\begin{theorem}
 Let $(A, \cdot, \varepsilon, \alpha, B)$ be a quadratic Hom-Novikov color algebra. Then $(A, [-, -], \varepsilon, \alpha, B)$
 is a quadratic  Hom-Lie color algebra, where $[x, y]=x\cdot y-\varepsilon(x, y)y\cdot x$ for all $x, y\in\mathcal{H}(A)$.
\end{theorem}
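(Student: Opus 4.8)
The plan is to verify, property by property, that the triple $(A,[-,-],\varepsilon,\alpha,B)$ satisfies the defining axioms of a quadratic Hom-Lie color algebra, observing that all but one of them are inherited verbatim from the given quadratic Hom-Novikov structure. By Theorem~\ref{nlt} the commutator bracket $[x,y]=x\cdot y-\varepsilon(x,y)y\cdot x$ already makes $A$ a Hom-Lie color algebra, so the underlying bracket axioms (the $\varepsilon$-skew-symmetry and the $\varepsilon$-Hom-Jacobi identity) require no further work. The bilinear form $B$ is the same as before, so it remains $\varepsilon$-symmetric and nondegenerate, and $\alpha$ is still $B$-symmetric; these three facts carry over directly from the hypothesis since they do not reference the multiplication. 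The single property that must genuinely be checked is the invariance of $B$ relative to the new bracket, namely $B([x,y],z)=B(x,[y,z])$ for all $x,y,z\in\mathcal{H}(A)$, which is the $\beta=\mathrm{id}$ form of the invariance condition in the quadratic setting.

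To establish this invariance I would expand both sides by bilinearity and invoke the Novikov invariance $B(u\cdot v,w)=B(u,v\cdot w)$ (the $\beta=\mathrm{id}$ case of condition (d)). On the left,
$$B([x,y],z)=B(x\cdot y,z)-\varepsilon(x,y)B(y\cdot x,z)=B(x,y\cdot z)-\varepsilon(x,y)B(y,x\cdot z),$$
and on the right,
$$B(x,[y,z])=B(x,y\cdot z)-\varepsilon(y,z)B(x,z\cdot y).$$
Subtracting, the common term $B(x,y\cdot z)$ cancels, so the desired identity $B([x,y],z)=B(x,[y,z])$ reduces to the single equality $\varepsilon(x,y)B(y,x\cdot z)=\varepsilon(y,z)B(x,z\cdot y)$.

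The remaining step is to derive this reduced equality from the $\varepsilon$-symmetry of $B$ together with the bicharacter identities. Applying $\varepsilon$-symmetry to the left factor gives $B(y,x\cdot z)=\varepsilon(y,x+z)B(x\cdot z,y)$, and Novikov invariance then yields $B(x\cdot z,y)=B(x,z\cdot y)$; substituting $\varepsilon(y,x+z)=\varepsilon(y,x)\varepsilon(y,z)$ and using $\varepsilon(x,y)\varepsilon(y,x)=1$ collapses the left-hand side of the reduced equality exactly onto its right-hand side. The main obstacle is therefore nothing conceptual but rather the careful bookkeeping of the bicharacter factors; once these are tracked correctly the invariance closes, and together with the inherited $\varepsilon$-symmetry, nondegeneracy, and $B$-symmetry of $\alpha$ this completes the verification that $(A,[-,-],\varepsilon,\alpha,B)$ is a quadratic Hom-Lie color algebra.
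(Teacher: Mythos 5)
Your proposal is correct and follows essentially the same route as the paper: invoke Theorem~\ref{nlt} for the Hom-Lie color structure, then verify $B([x,y],z)=B(x,[y,z])$ by combining the invariance of $B$ under $\cdot$ with the $\varepsilon$-symmetry of $B$ and the bicharacter identities. The only difference is cosmetic --- you apply invariance to the second term first and then $\varepsilon$-symmetry, whereas the paper chains $\varepsilon$-symmetry, invariance, $\varepsilon$-symmetry --- and both orderings collapse the bicharacter factors to $\varepsilon(y,z)$ as required.
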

\begin{proof}
Thanks to Theorem \ref{nlt}, $(A, [-, -], \varepsilon, \alpha)$ is a Hom-Lie color algebra. We have to prove that $B$ is invariant under the bracket
$[-, -]$. For all $x, y, z \in\mathcal{H}(A)$,
\begin{eqnarray}
 B([x, y], z)
&=&B(x\cdot y-\varepsilon(x, y)y\cdot x, z)\nonumber\\
&=&B(x\cdot y, z)-\varepsilon(x, y)B(y\cdot x, z)\nonumber\\
&=&B(x, y\cdot z)-\varepsilon(x, y)\varepsilon(x+y, z) B(z, y\cdot x)\nonumber\\
&=&B(x, y\cdot z)-\varepsilon(x, y)\varepsilon(x+y, z) B(z\cdot y, x)\nonumber\\
&=&B(x, y\cdot z)-\varepsilon(x, y)\varepsilon(x+y, z)\varepsilon(y+z, x) B(x, z\cdot y)\nonumber\\
&=&B(x, y\cdot z)-\varepsilon(y, z)B(x, z\cdot y)\nonumber\\
&=&B(x, y\cdot z-\varepsilon(y, z)z\cdot y)\nonumber\\
&=&B(x, [y, z])\nonumber.
\end{eqnarray}
This ends the proof.
\end{proof}
From Hom-quadratic Hom-Novikov color algebras to quadratic Novikov color algebras.
\begin{proposition}
 Let $(A, \cdot, \varepsilon, \alpha, B, \alpha)$ be a Hom-quadratic involutive Hom-Novikov color algebra. 
Then $(A, \cdot_\alpha=\alpha\circ\cdot, \varepsilon, B)$ is a quadratic Novikov color algebra.
\end{proposition}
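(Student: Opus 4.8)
The plan is to check separately the two ingredients that make $(A,\cdot_\alpha,\varepsilon,B)$ a quadratic Novikov color algebra: first that the twisted product $x\cdot_\alpha y=\alpha(x\cdot y)$ equips $A$ with a Novikov color algebra structure, and second that $B$ is still a nondegenerate, $\varepsilon$-symmetric, invariant bilinear form, now for the \emph{untwisted} notion of invariance (the target being an ordinary quadratic Novikov color algebra, i.e. the case $\alpha=id_A$, $\beta=id_A$). The first ingredient is immediate from Proposition \ref{icn}(1): since $\alpha^2=id_A$, that result asserts that $(A,\alpha\circ\cdot,\varepsilon)=(A,\cdot_\alpha,\varepsilon)$ satisfies both Novikov identities (\ref{n1}) and (\ref{n2}) with $\alpha$ replaced by the identity. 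The mechanism is exactly the involutivity: applying $\alpha$ to (\ref{n1}), pushing it through the products, and cancelling the outer $\alpha^2=id_A$ returns the untwisted first Novikov identity for $\cdot_\alpha$, and likewise (\ref{n2}) yields the untwisted second identity.

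For the bilinear form, $\varepsilon$-symmetry and nondegeneracy are properties of $B$ alone and so are inherited unchanged; only invariance under the new product requires verification. This is a short computation combining the invariance of $B$ for the pair $(B,\alpha)$ (property (d) with $\beta=\alpha$) with the $B$-symmetry of $\alpha$ (property (e)):
\begin{eqnarray}
B(x\cdot_\alpha y,\,z)&=&B(\alpha(x\cdot y),\,z)=B(x\cdot y,\,\alpha(z))\nonumber\\
&=&B(\alpha(x),\,y\cdot z)=B(x,\,\alpha(y\cdot z))=B(x,\,y\cdot_\alpha z),\nonumber
\end{eqnarray}
for all $x,y,z\in\mathcal{H}(A)$. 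It is worth noting that this chain uses neither involutivity nor multiplicativity of $\alpha$, so the invariance of $B$ is the structurally robust part of the statement.

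The main obstacle is a bookkeeping subtlety in the Novikov part rather than in the form. Proposition \ref{icn}(1) is stated for \emph{involutive multiplicative} Hom-Novikov color algebras, so the argument hinges on $\alpha$ being multiplicative, which must be read into (or added to) the Hom-quadratic hypotheses. If one prefers not to assume multiplicativity, the two Novikov identities for $\cdot_\alpha$ have to be established directly by expanding $(x\cdot_\alpha y)\cdot_\alpha z=\alpha\big(\alpha(x\cdot y)\cdot z\big)$ and repeatedly invoking (\ref{n1}), (\ref{n2}) and $\alpha^2=id_A$; this is the only place where one has to keep careful track of the $\varepsilon$-factors.
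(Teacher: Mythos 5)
Your proposal is correct and takes essentially the same route as the paper: it invokes Proposition \ref{icn} for the Novikov structure on $(A,\cdot_\alpha,\varepsilon)$ and then verifies invariance of $B$ by exactly the chain of equalities the paper uses (written in the reverse direction), combining property (d) with $\beta=\alpha$ and the $B$-symmetry of $\alpha$. Your observation that Proposition \ref{icn}(1) formally assumes multiplicativity of $\alpha$, which must be read into the hypotheses, is a fair point that the paper's own proof glosses over.
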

\begin{proof}
 By Proposition \ref{icn}, $(A, \cdot_\alpha=\alpha\circ\cdot, \varepsilon)$ is a Novikov color algebra. It suffises to prove that $B$ is
 invariant under $\cdot_\alpha$. For any $x, y, z\in\mathcal{H}(A)$.
\begin{eqnarray}
 B(x, y\cdot_\alpha z)=B(x, \alpha(y\cdot z))=B(\alpha(x), y\cdot z)=B(x\cdot y, \alpha(z))=B(\alpha(x\cdot y), z)=B(x\cdot_\alpha y, z),\nonumber
\end{eqnarray}
which ends the proof.
\end{proof}

\label{lastpage-01}
\end{document}